\author[1]{Asl{\i} Deniz}
\author[2]{Carsten Lunde Petersen} 
\affil[1]{American University of the Middle East, CBA, Department of Mathematics and Statistics, Kuwait}
\affil[2]{Roskilde University\\

Department of Science and Environment, Denmark
}
\title{Holomorphic Explosions - I}
\date{}
\theoremstyle{plain}
\def\la{\lambda}
\def\La{\Lambda}
\def\sm{\setminus}
\def\CC{{\mathcal{C}}}
\def\HH{{\mathcal{H}}}
\def\OO{{\mathcal{O}}}
\def\UU{{\mathcal{U}}}
\def\AAA{{\mathbb{A}}}
\newcommand{\D}{{\mathbb{D}}}
\newcommand{\Dbar}{{\overline{\mathbb{D}}}}
\newcommand{\Dstar}{{\mathbb{D}^*}}
\newcommand{\Chat}{{\widehat{\mathbb{C}}}}
\newcommand{\whpsi}{{\widehat{\psi}}}
\newcommand{\Z}{{\mathbb{Z}}}
  \newcommand*{\Ala}{{\bm A_\la}}
  \newcommand*{\C}{\mathbb{C}}
  \newcommand*{\Cstar}{\ensuremath{{\mathbb  C}^*}}
  \newcommand*{\ga}{\gamma}
  \newcommand*{\GlaA}{{G_{\la,A}}}
  \newcommand*{\laz}{{\lambda_0}}
  \newcommand*{\labar}{{\overline{\lambda}}}
  \newcommand*{\lastar}{{\lambda^*}}
  \newcommand*{\Lastar}{{\Lambda^*}}
  \newcommand*{\mapfromto}[3]{\hbox{\ensuremath{#1 : #2 \longrightarrow #3}}}
  \newcommand*{\Mbrot}{{\mathrm{M}}}
  \newcommand*{\Mla}{{\Mbrot_\la}}
  \newcommand*{\Mobius}{{M{\"o}bius}}
  \newcommand*{\MMtwo}{{\mathcal{M}_2}}
  \newcommand*{\Per}{{\operatorname{Per}}}
  \newcommand*{\si}{\sigma}
  \newcommand*{\sibf}{{\bm\si}}
  \newcommand*{\whH}{\ensuremath{{\widehat{H}}}}
  \newcommand*{\whMbrot}{{\widehat{\Mbrot}}}
  \newcommand*{\wtH}{\ensuremath{{\widetilde{H}}}}
\newcommand{\ALIGN}{\begin{align*}}
\newcommand{\ENDALIGN}{\end{align*}}
\newcommand{\ENUM}{\begin{enumerate}}
\newcommand{\ENUMa}{\begin{enumerate}[a.]}
\newcommand{\ENUMA}{\begin{enumerate}[A.]}
\newcommand{\ENUMAF}{\begin{enumerate}[\bf A.]}
\newcommand{\ENUMi}{\begin{enumerate}[i)]}
\newcommand{\ENDENUM}{\end{enumerate}}
\newcommand{\ITMZ}{\begin{itemize}}
\newcommand{\ENDITMZ}{\end{itemize}}
\newcommand{\REFEQN}[1] { \begin{equation}\label{#1} }
\newcommand{\ENDEQN}{\end{equation}}
\newcommand{\THM}{\begin{theorem}}
\newcommand{\REFEXA}[1] { \begin{example}\label{#1} }
\newcommand{\ENDEXA}{\end{example}}
\newcommand{\REM}{ \begin{remark}}
\newcommand{\ENDREM}{\end{remark}}
\newcommand{\REFTHM}[1] { \begin{theorem}\label{#1} }
\newcommand{\RREFTHM}[2] { \begin{theorem}[#1]\label{#2} }
\newcommand{\ENDTHM}{\end{theorem}}
\newcommand{\REFNTH}[1] { \begin{newthm}\label{#1} }
\newcommand{\ENDNTH}{\end{newthm}}
\newcommand{\REFPROP}[1]{\begin{proposition}\label{#1} }
\newcommand{\RREFPROP}[2]{\begin{proposition}[#1]\label{#2} }
\newcommand{\PROP}{\begin{proposition}}
\newcommand{\ENDPROP}{\end{proposition} }
\newcommand{\REFDEF}[1]{\begin{definition}\label{#1} }
\newcommand{\DEF}{\begin{definition}}
\newcommand{\ENDDEF}{\end{definition} }
\newcommand{\REFLEM}[1]{\begin{lemma}\label{#1} }
\newcommand{\RREFLEM}[2]{\begin{lemma}[#1]\label{#2} }
\newcommand{\LEM}{\begin{lemma}}
\newcommand{\ENDLEM}{\end{lemma} }
\newcommand{\REFCOR}[1]{\begin{corollary}\label{#1} }
\newcommand{\RREFCOR}[2]{\begin{corollary}[#1]\label{#2} }
\newcommand{\RCOR}[1]{\begin{corollary}[#1] }
\newcommand{\COR}{\begin{corollary}}
\newcommand{\ENDCOR}{\end{corollary} }
\newcommand{\COMP}{\begin{complementt}}
\newcommand{\RCOMP}[1]{\begin{complementt}[#1]}
\newcommand{\RREFCOMP}[2]{\begin{complementt}[#1] \label{#2} }
\newcommand{\ENDCOMP}{\end{complementt}}
\newcommand{\REFDEFTHM}[1] { \begin{defthm}\label{#1} }
\newcommand{\ENDDEFTHM}{\end{defthm}}
\newcommand{\corref}[1]{Corollary~\ref{#1}}
\newcommand{\thmref}[1]{Theorem~\ref{#1}}
\newcommand{\propref}[1]{Proposition~\ref{#1}}
\newcommand{\PROOF}{\begin{proof}}
\newcommand{\ENDPROOF}{\end{proof}}
\newtheorem{thm}{Theorem}
\numberwithin{thm}{section}
\newtheorem{compl}[thm]{Complement}
\newtheorem{prop}[thm]{Proposition}
\newtheorem{lem}[thm]{Lemma}
\theoremstyle{definition}
\theoremstyle{definition}
\newtheorem{defn}[thm]{Definition}
\newtheorem{rem}[thm]{Remark}
\newtheorem{cor}[thm]{Corollary}
\begin{document}
\maketitle

\begin{abstract}
This paper concerns the problem of extending the parameter domain of holomorphic motions to include isolated boundary points, punctures of the domains. Supposing that the parameter domain has an isolated boundary point $\lambda^*$, we explore the extension properties of the holomorphic motion to $\lambda^*$. 
\end{abstract}

\section{Introduction}
The notion of holomorphic motion was introduced by Ma\~n\'e-Sad-Sullivan in their work  \cite{masadsul1983} on the dynamics of  rational maps in order to construct conjugacies based on perturbations of the inclusion map of a subset of the sphere. Since then, this concept has been applied as a powerful tool in the field of holomorphic dynamics. 
A holomorphic motion in $\Chat$ can be described as a family of holomorphic functions 
with disjoint graphs over some domain $\Lambda\subset\Chat$ 
parametrized by the fiber over some point $\lambda_0\in\Lambda$. 
More precisely, for a domain $\Lambda$ in $\Chat$, a point $\lambda_0\in\Lambda$, and a subset $E$ of $\Chat$, 
the map $H:\Lambda\times E\rightarrow\Chat$ is called 
\textit{a holomorphic motion of $E$}, parametrized by $\Lambda$, 
with base point $\lambda_0$, if and only if the following conditions hold:
\begin{itemize}
\item[i.] $H(\lambda_0,z)=z$ for all $z\in E$,
\item[ii.] $H$ is vertically injective, i.e., for every fixed $\lambda\in\Lambda$, $H_{\lambda}:z\mapsto H(\lambda,z)$ is injective, and
\item[iii.] $H$ is horizontally holomorphic, i.e., for every fixed $z\in E$, $H^z:\lambda\mapsto H(\lambda,z)$ is holomorphic.
\end{itemize}

There are two famous extension results for holomorphic motions: 
\textit{$\lambda$-Lemma} states that any given holomorphic motion $H:\Lambda\times E\rightarrow\Chat$ has a unique extension to a holomorphic motion of the closure $\overline{E}$, parametrized by $\Lambda$. The second well known result, called \textit{the extended $\lambda$-Lemma} or \textit{S{\l}odkowski's Theorem} states that for simply connected $\Lambda\subset\Chat$, any holomorphic motion $H:\Lambda\times E\rightarrow\widehat{\mathbb{C}}$ extends to a holomorphic motion $H:\Lambda\times \widehat{\mathbb{C}}\rightarrow\widehat{\mathbb{C}}$. 
Moreover, for $\Lambda$ a hyperbolic disk and for every parameter $\lambda\in\Lambda$, 
the extension $H_{\lambda}$ is a quasiconformal homeomorphism, 
whose dilatation is bounded by $e^{d_{\Lambda}(\lambda_0,\lambda)}$, 
where $d_{\Lambda}(\cdot,\cdot)$ denotes the hyperbolic distance in $\Lambda$ \cite{Slodkowski}. 

Let $\Lambda^{*}\subseteq\C$ be a domain with a puncture at $\lambda^{*}$, 
i.e., having an isolated boundary point $\lambda^{*}\in\C$. 
Given a holomorphic motion $H:\Lambda^{*}\times E\rightarrow\C$, when we consider the extension of $H$ to $\lambda^{*}$, either holomorphicity of the functions $H^z$ or injectivity of $H_{\lambda^{*}}$ may fail. Moreover, dynamical information may be lost in such an extension, 
if $H$ is compatible with some dynamics. 
In this paper, we investigate the extension properties of $H$ to 
$\Lambda:=\Lambda^{*}\cup\{\lambda^{*}\}$, when $E$ is connected. We prove that if there exist two distinct points $z_1$ and $z_2$ in $E$ such that both $H^{z_1}$ and $H^{z_2}$ extend holomorphically to $\La$, then for all $z\in E$, $H^{z}$ extends holomoprhically to $\La$. Moreover, we prove that there exist two holomorphic functions $f,g:\La\rightarrow\mathbb{C}$ and a holomorphic motion $\whH$ over $\La$ such that the extension can be written as $H(\la, z)=f(\la)+g(\la)\widehat{H}(\la,z)$. In this respresentation, if $g(\la^*)=0$, then $H_{\la^*}$ is a constant function of $z$, hence the moving set $E$ shrinks to a point at $\la^*$. We call this type of extension \textit{a holomorphic explosion}. In this paper we present our first results in the study of this new notion.

The organization of this article is as follows: In Section \ref{sufcon}, we discuss the extension properties of holomorphic motions to isolated boundary points of the parameter domain. This discussion leads to the introduction of the notion of holomorphic explosions. In the following two sections we give applications of this new concept to holomorphic dynamics.
 
This work was supported by Marie Curie RTN 035651-CODY and Roskilde University.

\section{Main Extension Results of a Holomorphic Motion}\label{sufcon}
Suppose a holomorphic motion $H$ in $\C$ is parametrized by a domain $\Lambda^{*}$, which has an isolated boundary point $\lambda^{*}$ 
such that $\La=\Lambda^{*}\cup\{\lambda^{*}\}\subset\mathbb{C}$. 
We show that under mild conditions, 
the map $H$ \emph{extends horizontally holomorphically} to $\La$. 
That is, each function $H^z$ extends to a holomorphic function 
$H^z : \Lambda \to \C$, and so defines an extension of $H$ to $\La$.
This is the subject of the following theorem.
\begin{thm}\label{thmextension}
Let $E\subset \mathbb{C}$ be connected, and $\Lambda^{*}$ be a domain  with an isolated boundary point $\lambda^{*}$ such that $\La = \La^{*}\cup\{\la^{*}\}\subset\C$. 
Let $H:\Lambda^{*}\times E\rightarrow\mathbb{C}$ be a holomorphic motion 
over $\Lambda^{*}$ with base point $\lambda_0\in\Lambda^{*}$. 
Suppose there exist two distinct points $z_0,z_1\in E$ such that $H^{z_0}$ and $H^{z_1}$ extend holomorphically to $\La$. 
Then $H$ extends horizontally holomorphically to $\La$.
\end{thm}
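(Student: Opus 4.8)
The plan is to work locally at the puncture and reduce the theorem to a removable–singularity statement for each individual fibre, using the two given fibres as moving anchors. Since $\lambda^*$ is isolated, fix $r>0$ with the punctured disc $\Dstar_r:=\{0<|\lambda-\lambda^*|<r\}\subset\Lambda^*$; as each $H^z$ is already holomorphic on $\Lambda^*$, it suffices to show that every $H^z$ extends holomorphically across $\lambda^*$, which is a local question on $\Dstar_r$. For $z\in E$ I would set
\[
G^z(\lambda):=\frac{H^z(\lambda)-H^{z_0}(\lambda)}{H^{z_1}(\lambda)-H^{z_0}(\lambda)}.
\]
Vertical injectivity gives $H^{z_1}(\lambda)\neq H^{z_0}(\lambda)$ (so the denominator never vanishes on $\Dstar_r$) and $H^z(\lambda)\notin\{H^{z_0}(\lambda),H^{z_1}(\lambda)\}$, so $G^z$ is a holomorphic map $\Dstar_r\to\C\setminus\{0,1\}$; being $\C$-valued it also omits $\infty$.

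First I would rule out an essential singularity. A holomorphic function on a punctured disc omitting the two values $0$ and $1$ cannot have an essential singularity at the puncture, since by the Great Picard Theorem it would otherwise take every value of $\Chat$ with at most one exception infinitely often near $\lambda^*$. Hence each $G^z$ extends meromorphically over $\lambda^*$, and writing
\[
H^z(\lambda)=H^{z_0}(\lambda)+G^z(\lambda)\bigl(H^{z_1}(\lambda)-H^{z_0}(\lambda)\bigr)
\]
together with the hypothesis that $H^{z_0},H^{z_1}$ extend holomorphically, each $H^z$ extends meromorphically to $\lambda^*$ with a well-defined value $a(z):=\lim_{\lambda\to\lambda^*}H^z(\lambda)\in\Chat$, and $a(z_0),a(z_1)\in\C$.

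The heart of the proof is then to exclude genuine poles, i.e.\ to prove $a(z)\in\C$ for every $z\in E$; here both the connectedness of $E$ and the full motion structure are indispensable, since a single function omitting two values may certainly have a pole (for instance $1/\lambda$ on $\Dstar_r$ omits $0$ and $1$). Suppose for contradiction that $a(z_2)=\infty$ for some $z_2\in E$. The idea is to track the point sent to $\infty$. Passing to the universal covering $\pi\colon\Hplus\to\Dstar_r$, so that $\lambda\to\lambda^*$ corresponds to $\operatorname{Im}u\to\infty$, the lifted motion $(u,z)\mapsto H(\pi(u),z)$ is parametrized by the simply connected $\Hplus$, so by S{\l}odkowski's Theorem \cite{Slodkowski} it extends to a holomorphic motion $\Phi$ of all of $\Chat$ with each $\Phi_u$ a quasiconformal homeomorphism. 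The inverse motion $u\mapsto\Phi_u^{-1}$ is again a holomorphic motion, so $\zeta(u):=\Phi_u^{-1}(\infty)$ is holomorphic on $\Hplus$; since $H(\pi(u),E)\subset\C$, the point $\zeta(u)$ never lies in $E$, so $\zeta$ avoids the infinite set $E$. I would argue that $\zeta(u)\to z_2$ as $\operatorname{Im}u\to\infty$: because $\Phi_u(z_2)=H^{z_2}(\pi(u))\to\infty=\Phi_u(\zeta(u))$ while the anchors satisfy $\Phi_u(z_0)\to a(z_0)$ and $\Phi_u(z_1)\to a(z_1)$ in $\C$, the $\infty$–fibre is forced to track $z_2$ and not the bounded anchors. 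Granting this, $\zeta$ is bounded near the end and so descends to a holomorphic function with a removable singularity taking the interior value $z_2$; but $z_2$ is not isolated in the connected set $E$, so by the Open Mapping Theorem $\zeta$ takes values in $E$ at points accumulating at the puncture (or is constant $\equiv z_2\in E$), contradicting $\zeta(\Hplus)\cap E=\varnothing$. Hence no fibre has a pole.

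Combining the two steps, every $H^z$ extends holomorphically across $\lambda^*$ with finite value $a(z)$, and since $H^z$ is already holomorphic on $\Lambda^*$ this yields holomorphic $H^z\colon\Lambda\to\C$ for all $z\in E$, i.e.\ $H$ extends horizontally holomorphically to $\La$. The main obstacle is the convergence $\zeta(u)\to z_2$ inside Step~2: the quasiconformal dilatation of $\Phi_u$ blows up as $\operatorname{Im}u\to\infty$, so the maps $\Phi_u^{-1}$ are not equicontinuous and the limit cannot be read off directly from $\Phi_u(\zeta(u))$ and $\Phi_u(z_2)$ being spherically close. I expect to control this through the hyperbolic metric of $\Chat\setminus\{z_0,z_1,z_2\}$: the $\pi$–preimages of small circles about $\lambda^*$ have hyperbolic length tending to $0$ in $\Dstar_r$, hence so do their $\zeta$–images in $\Chat\setminus\{z_0,z_1,z_2\}$, which forces $\zeta$ to converge to a puncture or to an interior point, and the boundedness of the anchor images $\Phi_u(z_0),\Phi_u(z_1)$ away from $\infty$ is what pins this limit to $z_2$ rather than to $z_0$, $z_1$, or an escape to another end.
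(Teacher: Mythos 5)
Your first step---normalizing by the two anchor trajectories so that every fibre omits two fixed finite values, then invoking the great Picard theorem to obtain a meromorphic extension of each $H^z$ across $\la^*$---is exactly the paper's first step (the paper uses $\wtH(\la,z)=z_0+(z_1-z_0)\frac{H(\la,z)-H(\la,z_0)}{H(\la,z_1)-H(\la,z_0)}$, which fixes $z_0,z_1$; your $G^z$ is the same normalization up to an affine change of target). The divergence, and the problem, is in the pole-exclusion step. Your argument hinges on the claim that the inverse motion $u\mapsto\Phi_u^{-1}$ of the S{\l}odkowski extension is again a holomorphic motion, so that $\zeta(u)=\Phi_u^{-1}(\infty)$ is holomorphic. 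This is false in general: for $\Phi(\la,z)=z+\la\bar z$ over $\D$ (a genuine holomorphic motion of $\C$, extendable to $\Chat$ by fixing $\infty$) one computes $\Phi_\la^{-1}(w)=(w-\la\bar w)/(1-|\la|^2)$, which is not holomorphic in $\la$ for $w\neq 0$. Trajectories $\la\mapsto\Phi(\la,\Phi_{\la_1}^{-1}(w))$ through a fixed point of the product space are holomorphic, but the fibrewise preimage of a fixed target is only continuous. Without holomorphicity of $\zeta$ your contradiction evaporates: a merely continuous $\zeta$ can avoid $E$ while converging to the non-isolated point $z_2\in E$ (think of $E$ a segment and $\zeta$ approaching an interior point from one side), so the open mapping theorem has nothing to act on. Two further points are unresolved: the S{\l}odkowski extension over $\Hplus$ is neither canonical nor equivariant under the deck group, so $\zeta$ has no reason to descend to the punctured disc (hence ``removable singularity at the puncture'' is not yet meaningful); and the convergence $\zeta(u)\to z_2$, which you flag yourself, is not established.

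The paper excludes poles by an elementary device you should compare with: for $j=0,1$ it sets $n_j(z)=\frac{1}{2\pi i}\oint_\gamma\frac{\partial_\la\wtH(\la,z)}{\wtH(\la,z)-z_j}\,d\la$ over a small negatively oriented circle $\gamma$ about $\la^*$. By the argument principle $n_j(z)$ equals the pole order minus the zero order of $\wtH^z-z_j$ at $\la^*$; it is continuous in $z$ (the contour stays in $\La^*$, where the motion is jointly continuous and the denominator is nonvanishing by vertical injectivity) and integer-valued, hence locally constant on $E\setminus\{z_j\}$. Therefore $U=\{z\in E:\wtH^z(\la^*)=\infty\}=\{z\in E\setminus\{z_0\}:n_0(z)>0\}$ and its complement are both open, and connectedness of $E$ together with $z_0,z_1\notin U$ forces $U=\emptyset$. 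If you wish to keep your framework, replacing the S{\l}odkowski/inverse-motion step by this winding-number argument is the natural repair.
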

\begin{proof}
Define
\begin{equation}\label{tildeH}
\wtH(\lambda,z):=z_0+(z_1-z_0)\frac{H(\lambda,z)-H(\lambda,z_0)}{H(\lambda,z_1)-H(\lambda,z_0)}.
\end{equation}
Then $\wtH(\lambda,z_1)\neq \wtH(\lambda,z_0)$ for all $\lambda\in\Lambda^{*}$, 
as $z_1\neq z_0$. 
Thus, for every $z\in E$, $\wtH^{z}=\wtH(\cdot,z)$ is holomorphic in $\Lambda^{*}$. 
Moreover, for every $\lambda\in \Lambda^{*}$, $ \wtH_{\lambda}=\wtH(\la,\cdot)$ is injective, 
since $\wtH_{\lambda}$ is obtained from $H_{\lambda}$ 
by post-composition with an injective affine map, which is the identity for $\la=\la_0$. 
So $\wtH$ is also a holomorphic motion in $\C$ of $E$, parametrized by $\Lambda^{*}$ 
and with base point $\lambda_0$. 
Also $\wtH(\lambda,z_j)\equiv z_j$, $j=0, 1$ by construction. 
Then $\wtH$ fixes the two distinct points $z_0,z_1$ in $\C$. 
Hence, for every $z\in E\sm\{z_0,z_1\}$ the holomorphic function $\wtH^z$ 
takes its values in $\C\sm\{z_0, z_1\}$. So the point $\lambda^{*}$ 
is a removable singularity for $\wtH^z$ by Picard's Theorem. 
That is, $\wtH^{z}$ extends meromorphically to $\La$ for every $z\in E$. 
This defines a horizontally meromorphic extension of $\wtH$ to $\Lambda\times E$.

We show that this extension of $\wtH$ is in fact horizontally holomorphic. 
That is, there is no point $z\in E$, such that $\wtH^z(\la^*) = \infty$. 
Set $U:=\{z\in E;\;\;\wtH(\lambda^{*},z)=\infty\}$. We prove that $U$ is the empty set. 
To this end, we will show that $U$ is both open and closed in the topology of $E$. 
Then as $E$ is connected by hypothesis and $z_0, z_1\in E\sm U$, we conclude that $U=\emptyset$. For each $j$ the subset $E_j := E\sm\{z_j\}$ is a relatively open subset of $E$ containing $U$. Let $\Gamma$ be a round disk in $\La$ with center $\la^*$. 
Define two functions 
$n_j : E_j \to\Z$ for $j=0,1$ by 
\begin{equation*}\label{argprinciple}
n_j(z):=-\frac{1}{2\pi i}\oint_{\partial\Gamma}\frac{\frac{\partial}{\partial\lambda}\wtH(\lambda,z)}{\wtH(\lambda,z)-z_j}d\lambda.
\end{equation*} 
Then each $n_j$ is continuous, since $\partial\Gamma\subset\La^*$, $z_j = \wtH(\la,z_j)$ for all $\la$, 
and $\wtH_\la$ is injective for all $\la\in\La^*$. 
By the Argument Principle, the function $n_j$ counts the number of poles minus the number of zeroes (with multiplicity) of $\lambda\mapsto\wtH(\lambda, z)-z_j$ in $\Gamma$. Hence, each $n_j$ is locally constant in $E_j$ as it takes values in the discrete set $\Z$. 
Furthermore, the only possible zero or pole is at $\lambda^{*}$. 
This means that $n_j(z)$ is the order of $\la^*$ as a pole of $\wtH^z$ if $n_j(z) >0$, 
$-n_j(z)$ is the order of $\la^*$ as a zero of $\wtH^z$ if $n_j(z) < 0$, and 
$n_j(z) = 0$ precisely if $z_j\not=\wtH^z(\la^*)\in\C$. 

To see that $U$ is open, let $z\in U$ be arbitrary. 
Then $z\not= z_0$, $n_0(z) > 0$ and there exists a relatively open 
neighbourhood $\omega\subset E_0$ of $z$ such that 
$n_0(w) = n_0(z) > 0$ for all $w\in\omega$, since $n_0$ is locally constant.
As $z\in U$ was arbitrary, the set $U$ is open in $E\sm\{z_0\}$ and thus in $E$. 

To see that $U$ is closed, we prove that $E\sm U$ is open in $E$. 
Since each $E_j$ is open in $E$ and $E= E_0\cup E_1$ it suffices to prove 
that each subset $E_j\sm U$ is open.
Let $z\in E_j\sm U$ for some $j$. Then $n_j(z)\leq 0$ and there exists a relatively open 
neighbourhood $\omega\subset E_0$ of $z$ such that 
$n_0(w) = n_0(z) \leq 0$ for all $w\in\omega$, since $n_j$ is locally constant.
As $z\in E_j\sm U$ was arbitrary the latter is open in $E_j$ and thus in $E$. 
So $E\sm U = (E_0\sm U)\cup(E_1\sm U)$ is also open. 

To conclude, since $\widetilde{H}$ has a horizontally holomorphic extension to $\lambda^{*}$, so has $H$.
\end{proof}

\begin{compl}\label{complementformofH}
With the hypotheses of Theorem \ref{thmextension}, the map $\wtH$ defined by \eqref{tildeH}  extends to a holomorphic motion of $E$ over $\La$, that is $\wtH_{\la^*}$ is injective. Moreover, there exist two holomorphic functions $f, g:\La\rightarrow\mathbb{C}$ with $f(\la_0)=0$,  and $g(\la_0)=1$, $g$ never vanishing in $\La^{*}$, such that
\begin{equation}\label{formH}
H(\la, z)=f(\la)+g(\la)\cdot\wtH(\la,z).
\end{equation} 
\end{compl}
\begin{proof}
Let us first prove that the horizontally holomorphic extension of $\wtH$ to $\Lambda$, i.e., across $\lambda^*$ is continuous on $\Lambda\times E$. Since it is an extension of a holomorphic motion over $\Lambda^*$ of $E$, we only need to prove continuity at the points $(\lambda^*,z)$, where $z\in E$. However this follows from $\wtH$ being horizontally holomorphic. More precisely, with $\Gamma$ as in the proof above we have 
$$
\wtH(\lambda,z) = \frac{1}{2\pi i}\oint_{\partial\Gamma}\frac{\wtH(\lambda',z)}{\lambda'-\lambda}d\lambda'
$$
Thus, $\wtH$ is continuous $\Gamma\times E$, since it is continuous on $\partial\Gamma\times E$.

Suppose towards a contradiction that there exist two distinct points $w_0, w_1\in E$ such that $\wtH_{\la^*}(w_0) = \wtH_{\la^*}(w_1) = w$. Our aim is to show that $U_w := \{z\in E|\;\;{\widetilde{H}_{\la^*}}(z) = w\}$ is the empty set. To do so, we shall show that $U_w$ is both open and closed in the topology on $E$. Since $\wtH_{\lambda^*}$ is continuous it is closed. Thus we only need to prove it is also open.

Suppose towards a contradiction that $U_w$ is not open. 
Then there exists a point $w_{\infty}\in U_w$ and a sequence $\{w_k\} \subset E\backslash  U_w$ 
such that $w_k\rightarrow w_{\infty}$. 
Moreover, either $w_\infty\not=w_0$ or $w_\infty\not=w_1$. 
We shall detail the first case, the second being similar. Define 
\begin{equation*}
B(\la,z):=\wtH(\la,z)-\wtH(\la,w_0).
\end{equation*}
on $\La\times E$. Clearly, $\wtH(\lambda,z)$ being a holomorphic motion over $\Lambda^{*}$, $B(\lambda,z)$ vanishes at $\lambda^*$ only for $z\in U_w\backslash\{w_0\}$. Let
\begin{equation}\label{balambda}
\epsilon=\inf_{\lambda\in\partial \Gamma}|B(\lambda,w_{\infty})|.
\end{equation}
Since $B(\lambda,z)$ is continuous, and since $w_k\rightarrow w_\infty$, there exists $N\in\mathbb{N}$, such that for all $j\geq N$,
\begin{equation}\label{ba}
\forall\lambda\in\partial \Gamma:\;\;\;\;|B(\lambda,w_{\infty})-B(\lambda,w_j)|<\epsilon.
\end{equation}
By assumption, $B(\lambda,w_j)$ has no zero at $\lambda^{*}$ for all $j\geq N$. 
Considering (\ref{balambda}) and (\ref{ba}),  
$B(\lambda,w_j)$ and  $B(\lambda,w_{\infty})$ have the same number of zeroes in $\Gamma$, 
counted with multiplicity, by Rouch\'e's Theorem. 
This assures that, $B(\lambda, w_j)$ has at least one zero in $\Gamma$, 
which is not $\lambda^{*}$. 
This in turn implies there exists $\lambda'\neq\lambda^{*}$ in $\Gamma$, 
such that $B(\lambda',w_j)=0$, i.e., $\wtH(\lambda',w_j)=\wtH(\lambda',w_0)$. 
But this contradicts the fact that the map $\wtH_{\lambda'}$ is injective for $\lambda'\neq \lambda^{*}$, being a holomorphic motion in $\Lambda^{*}\times E$. 
This proves that there is no sequence $\{w_k\}\subset E\backslash U_w$, 
with $w_k\rightarrow w_{\infty}\in U_w\backslash\{w_0\}$. 
In the case $w_{\infty}=w_0 \in U_w\backslash\{w_1\}$, 
we apply the same procedure with $w_1$ in place of $w_0$ in the definition of $B(\la, z)$
to obtain that there is no sequence $\{w_k\}\subset E\backslash U_w$, 
with $w_k\rightarrow w_{\infty}$. 
Thus $U_w$ is open.

Obviously, the set $E\backslash U_w$ is nonempty, as it contains either $z_0$, $z_1$, or both. Moreover, it is open and closed by the above. Finally, since $E$ is connected, $U_w=\emptyset$.

Solving (\ref{tildeH}) for $H(\la, z)$, we obtain (\ref{formH}) with
\begin{equation}\label{fandg}
\begin{aligned}
f(\la)&:=H(\la,z_0)-\frac{z_0}{z_1-z_0}(H(\la, z_1)-H(\la,z_0))\\
g(\la)&:=\frac{H(\la, z_1)-H(\la,z_0)}{z_1-z_0}.
\end{aligned}
\end{equation}

\end{proof}
\begin{cor}\label{corollaryz_0z_1}
Given $z_0,z_1\in E$:
\begin{itemize}

\item[i.] If $H(\lambda^{*},z_0)\neq H(\lambda^{*},z_1)$, then $g(\la^{*})\neq 0$, and hence the extension of $H$ to $\La\times E$ is a holomorphic motion.
\item[ii.] If $H(\lambda^{*},z_0)=H(\lambda^{*},z_1)=z^{*}\in \mathbb{C}$, then
$g(\la^{*})=0$ and $f(\la^{*})=z^{*}$, that is, for all $z\in E$, $H(\lambda^{*},z)=z^{*}$. 
\end{itemize}

\end{cor}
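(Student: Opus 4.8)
The plan is to read both statements directly off the multiplicative decomposition $H(\la,z)=f(\la)+g(\la)\cdot\wtH(\la,z)$ from \eqref{formH} in Complement~\ref{complementformofH}, combined with the explicit formulas \eqref{fandg} for $f$ and $g$. The substantive work is already complete: by Theorem~\ref{thmextension} the functions $H^z$, and hence $\wtH^z$, extend holomorphically across $\la^*$, so $\wtH(\la^*,z)\in\C$ for every $z\in E$; and by Complement~\ref{complementformofH} the extended slice $\wtH_{\la^*}$ is injective. The corollary is then essentially a bookkeeping exercise in evaluating $f$ and $g$ at $\la^*$.

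First I would compute $g(\la^*)$. From $g(\la)=(H(\la,z_1)-H(\la,z_0))/(z_1-z_0)$ and the holomorphic extensions of $H^{z_0}$ and $H^{z_1}$ one obtains $g(\la^*)=(H(\la^*,z_1)-H(\la^*,z_0))/(z_1-z_0)$. This already produces the dichotomy: $g(\la^*)\neq 0$ precisely when $H(\la^*,z_0)\neq H(\la^*,z_1)$. For case~i, where $g(\la^*)\neq 0$, I would write $H_{\la^*}=A\circ\wtH_{\la^*}$ with $A(w)=f(\la^*)+g(\la^*)\,w$ an affine map that is injective because $g(\la^*)\neq 0$. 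Since $\wtH_{\la^*}$ is injective, so is $H_{\la^*}$; together with horizontal holomorphicity over $\La$ from Theorem~\ref{thmextension} and the normalization $H(\la_0,z)=z$, this shows the extension of $H$ to $\La\times E$ is a holomorphic motion.

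For case~ii, $g(\la^*)=0$ is immediate from the computation above. To identify $f(\la^*)$, I would evaluate \eqref{formH} at $z_0$: since $\wtH(\la,z_0)\equiv z_0$, we have $H(\la,z_0)=f(\la)+g(\la)z_0$, whence $f(\la^*)=H(\la^*,z_0)-g(\la^*)z_0=z^*$. Substituting back into \eqref{formH} gives $H(\la^*,z)=f(\la^*)+g(\la^*)\wtH(\la^*,z)=z^*$ for every $z\in E$. I expect no serious obstacle, since the corollary is a direct consequence of the two preceding results; the one point requiring care is that the collapse in case~ii relies on $\wtH(\la^*,z)\in\C$, so that the term $g(\la^*)\wtH(\la^*,z)$ genuinely vanishes rather than producing an indeterminate $0\cdot\infty$, and this finiteness is exactly what Theorem~\ref{thmextension} guarantees.
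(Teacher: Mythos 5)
Your proof is correct and follows exactly the route the paper intends: the corollary is stated without proof precisely because it reads off from the decomposition \eqref{formH}, the explicit formulas \eqref{fandg}, and the injectivity of $\wtH_{\la^*}$ established in Complement~\ref{complementformofH}. Your added remark that the finiteness of $\wtH(\la^*,z)$ (from Theorem~\ref{thmextension}) is what prevents an indeterminate $0\cdot\infty$ in case~ii is a worthwhile point of care, but it does not change the argument.
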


None of the components on the right side of the form (\ref{formH}) is unique. We will explain this in Proposition \ref{affinerelation} and Corollary \ref{Hsimpleform}, by comparing two representations of $H$ given below.
\\
\\
\textbf{Setup.} Let $\Lambda^{*}$ be a domain  with an isolated boundary point $\lambda^{*}$ such that $\La = \La^{*}\cup\{\la^{*}\}\subset\C$, and let $E\subset \mathbb{C}$ be a set containing at least two points. Suppose that $H:\La^{*}\times E\rightarrow\mathbb{C}$ is a holomorphic motion with base point $\la_0\in\La^{*}$, and with a horizontally analytically extension to $\La\times E$ having two representations:
\begin{eqnarray}
H(\la, z)&=&\widetilde{f}(\la)+\widetilde{g}(\la)\cdot\widetilde{H}(\la,z)\label{wtH}\\
&=&\widehat{f}(\la)+\widehat{g}(\la)\cdot\whH(\la,z)\label{whH}
\end{eqnarray} 
where $\wtH, \whH:\La\times E\rightarrow\C$ are holomorphic motions with base point $\la_0$, $\widetilde{f},\widehat{f}, \widetilde{g}, \widehat{g}:\La\rightarrow \C$ are holomorphic functions with $\widetilde{f}(\la_0)=\widehat{f}(\la_0)=0$, $\widetilde{g}(\la_0)=\widehat{g}(\la_0)=1$, and $\widetilde{g}, \widehat{g}$ never vanishing on $\Lastar$.
\begin{prop}\label{affinerelation}
In the setting above, $\wtH$ and $\whH$ are related by a holomorphically varying family of injective affine maps:
\begin{eqnarray*}
A:\La\times \C&\rightarrow& \C\\
(\la, z)&\mapsto& q(\la)z+r(\la),
\end{eqnarray*}
where $q, r:\La\rightarrow \C$ are holomorphic with $r(\la_0)=0$, $q(\la_0)=1$, $q$ never vanishing on $\La$, such that
\begin{equation*}
    \whH(\la, z)=A(\la, \wtH(\la, z)).
\end{equation*}
\end{prop}
\begin{proof}
Take two distinct points $z_0, z_1\in E$. Then
\begin{eqnarray*}
H(\la, z_1)-H(\la, z_0)&=&\widetilde{g}(\la)\left(\wtH(\la, z_1)-\wtH(\la, z_0)\right)\\
&=&\widehat{g}(\la)\left(\whH(\la, z_1)-\whH(\la, z_0)\right)
\end{eqnarray*}
By the above
\begin{equation*}
\lim_{\la\to\lastar}\frac{\widetilde{g}(\la)}{\widehat{g}(\la)} =
\lim_{\la\to\lastar}\frac{\whH(\la, z_1) - \whH(\la, z_0)}{\wtH(\la, z_1) - \wtH(\la, z_0)} \not = 0.
\end{equation*}
This means that the holomorphic function $q:= \widetilde{g}/\widehat{g}$ extends holomorphically to $\lastar$ with $q(\lastar)\not=0$.

Now consider the equation 
\begin{equation*}
    \widehat{g}(\la)\left(\whH(\la, z)-q(\la)\wtH(\la, z) \right)=\widetilde{f}(\la)-\widehat{f}(\la)
\end{equation*}
which is obtained from equalizing the right sides of (\ref{wtH}) and (\ref{whH}). It follows that the expression $\whH(\la, z)-q(\la)\wtH(\la, z)$ is a holomorphic function of $\la$ on $\La$. Define $r(\la):=\whH(\la, z)-q(\la)\wtH(\la, z)$. Then
\begin{eqnarray}\label{wtHwhH}
    \whH(\la, z)&=&q(\la)\wtH(\la, z)+r(\la),\\
                &=&A(\la, \wtH(\la, z)).     \nonumber
\end{eqnarray}

\end{proof}

\begin{cor}\label{Hsimpleform}

Substituting (\ref{wtHwhH}) in (\ref{whH}) and equalizing the expression with (\ref{wtH}), we obtain
\begin{eqnarray*}
\widehat{g}(\la)&=&q(\la)\widetilde{g}(\la)\\
\widehat{f}(\la)&=&\widetilde{f}(\la)+\widetilde{g}(\la)r(\la).
\end{eqnarray*}

Notice that
\begin{itemize}
    \item The order of $\lastar$ as a zero of $\widetilde{g}$ and $\widehat{g}$ are same, since $q(\lastar)\neq 0$. Let $n$ denote the common order.
     \item The $(n-1)$th Taylor polynomial of $\widetilde{f}$ and $\widehat{f}$ around $\la^{*}$ coincide since the order of $\lastar$ as a zero of  $\widetilde{g}(\la)r(\la)$ is at least $n$. Let  $P(\la-\la^{*})$ denote the common Taylor polynomial.
\end{itemize}

It follows that, given a representation as in (\ref{wtH}), we can always take $\widehat{f}$ and 
$\widehat{g}$  in (\ref{whH}) to be polynomials. Indeed, we have the following very simple form
\begin{equation}\label{formofexplosioneq}
H(\la, z)= P(\lambda-\lambda^{*})+\textstyle{\left(\frac{\lambda-\lambda^{*}}{\laz-\lastar}\right)^n}\cdot\whH(\lambda,z),
\end{equation}
where $\widehat{g}(\la) = \left(\frac{\lambda-\lambda^{*}}{\laz-\lastar}\right)^n$ 
and $\widehat{f}(\la) = P(\la-\lastar)$.
\end{cor}

\rem

The assumption that the set $E$ is connected is essential for the conclusion of Theorem \ref{thmextension} and its complement. 
In fact, let $\La=\D$, $\la^*=0$, so that $\La^* = \Dstar$. Let $\la_0 = 1/e$, set $E_H = \{0,1/e, e\}$, $E_G = \{0, 1/e, 1/e^2\}$ and define two holomorphic motions over $\La^*$:
$$
H(\la,z) = 
\begin{cases}
0,\qquad &z=0,\\
\la,\qquad &z=1/e,\\
1/\la,\qquad &z =e.
\end{cases}
\quad\textrm{and}\quad
G(\la,z) = 
\begin{cases}
0,\qquad &z=0,\\
\la,\qquad &z=1/e,\\
\la^2,\qquad &z =1/e^2.
\end{cases}
$$
Then both $H$ and $G$ satisfies all the hypotheses in Theorem~\ref{thmextension}~except that the sets $E_H$ and $E_G$ are not connected. 
Moreover, the function $H^e$ has a pole at $0$. Thus $H$ extends horizontally meromorphically, but not horizontally holomorphically to $\D$. 
That is, $H$ does not satisfy the conclusion of Theorem~\ref{thmextension}. 
However, $G$ does satisfy the conclusion of Theorem~\ref{thmextension}, but does not satisfy the conclusion of Complement~\ref{complementformofH}.
\endrem

\rem

The assumption in Theorem~\ref{thmextension} that for two distinct points $z_0, z_1\in E$ the holomorphic functions $H^{z_0}$ and $H^{z_1}$ extend holomorphically to $\La$ is essential for the conclusion of the theorem.
Indeed, consider the holomorphic motion $H:\Dstar\times\mathbb{C}\rightarrow \mathbb{C}$ given by
$$
H(\la,z) = e^{(2-1/\la)}\cdot z
$$
with base point $\frac{1}{2}$. 
For this holomorphic motion, $H^{z_0}$, $z_0 = 0$ extends holomorphically to $\lastar = 0$. 
However, $H$ does not satisfy the conclusion of Theorem~\ref{thmextension}.
\endrem

In the following proposition, we give the representation of any holomorphic motion over the whole complex plane, regardless of the connectedness of the moving set.
\begin{prop}\label{1}
Let $E\subset\mathbb{C}$ be a set containing at least two points, and let $H:\mathbb{C}\times E\rightarrow\mathbb{C}$ be a holomorphic motion of $E$ over $\mathbb{C}$ with base point $\lambda_0$. Then there exist two holomorphic functions $f,g:\mathbb{C}\rightarrow\mathbb{C}$ with $f(\lambda_0)=0$, $g$ non-vanishing and $g(\lambda_0)=1$, such that $H$ has an expression
\begin{equation}\label{whenparametersetisC}
H(\la,z)=f(\la)+g(\la)\cdot z
\end{equation}

\end{prop}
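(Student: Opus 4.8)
The plan is to reduce the statement to Picard's Theorem, exactly along the lines of the affine normalization used in Theorem~\ref{thmextension}. Since $E$ contains at least two points, first fix distinct $z_0,z_1\in E$ and form the normalized map $\wtH$ defined by \eqref{tildeH}. As in the proof of Theorem~\ref{thmextension}, $\wtH$ is a holomorphic motion of $E$ over $\C$ with base point $\la_0$ which fixes the two points $z_0$ and $z_1$, and satisfies $\wtH(\la_0,z)=z$ for every $z\in E$.

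The crux is the following observation, which uses that the parameter domain is all of $\C$. For each $z\in E$ the horizontal function $\wtH^z$ is \emph{entire}: it is a quotient of entire functions whose denominator $H^{z_1}-H^{z_0}$ never vanishes, because $H$ is a holomorphic motion and hence has disjoint graphs. Now take $z\in E\sm\{z_0,z_1\}$. Vertical injectivity of $\wtH_\la$ together with $\wtH(\la,z_j)\equiv z_j$ forces $\wtH^z(\la)\notin\{z_0,z_1\}$ for every $\la\in\C$; that is, the entire function $\wtH^z$ omits the two distinct values $z_0$ and $z_1$. By Picard's Theorem $\wtH^z$ is therefore constant, and since $\wtH^z(\la_0)=z$ we conclude $\wtH(\la,z)\equiv z$ for all $z\in E$.

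It then remains only to unwind the normalization. Solving \eqref{tildeH} for $H$ under the identity $\wtH(\la,z)\equiv z$ yields $H(\la,z)=f(\la)+g(\la)\cdot z$ with $f$ and $g$ given precisely by the formulas \eqref{fandg}. The required normalizations are immediate: $g=(H^{z_1}-H^{z_0})/(z_1-z_0)$ is entire and non-vanishing on $\C$ since the graphs of $H^{z_0}$ and $H^{z_1}$ are disjoint, while $g(\la_0)=1$ and $f(\la_0)=0$ follow directly from $H(\la_0,z_i)=z_i$.

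The only genuinely substantive step is the appeal to Picard's Theorem; everything else is the affine bookkeeping already recorded in \eqref{fandg}. I would also remark that connectivity of $E$, although assumed in the statement, is not in fact needed for this argument, because Picard's Theorem is applied separately for each $z$ rather than through an open-and-closed argument as in Theorem~\ref{thmextension} and Complement~\ref{complementformofH}.
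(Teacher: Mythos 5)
Your proof is correct and follows essentially the same route as the paper's: normalize to $\wtH$ via \eqref{tildeH}, apply Picard's Little Theorem to each horizontal function $\wtH^z$ to conclude $\wtH(\la,z)\equiv z$, and recover $f$ and $g$ from \eqref{fandg}. Your side observation that connectedness of $E$ is not actually needed for this argument is also accurate, since Picard is applied pointwise in $z$.
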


\begin{proof}
Let $z_0,z_1\in E$ be distinct points. Define $\widetilde{H}$ as in (\ref{tildeH}).
Recall that $\wtH$ fixes both points $z_0, z_1$. 
Hence for any $z\in E\setminus\{z_0, z_1\}$ the horizontal map 
$\wtH^z:\C\to\C$ avoids the two points $z_0, z_1$, and thus is constant by the Picard's Little Theorem. 
Hence $\widetilde{H}(\la,z) = z$ on $\C\times E$. Taking $f$ and $g$ as in \eqref{fandg} we obtain (\ref{whenparametersetisC}). 
\end{proof}

Obviously, $\wtH$ in (\ref{formH}) can be reparametrized as a holomorphic motion of the set $E^{*}=\wtH(\la^{*}, E)$ over $\Lambda$ with base point $\la^{*}$. Motivated by Theorem~\ref{thmextension}~and Complement~\ref{complementformofH}, we define the notion of \emph{a Holomorphic Explosion}, which is a relative of holomorphic motions.

\begin{defn} (Holomorphic Explosion) Let $\Lambda\subsetneq\Chat$ be a domain and $E^*\subset\mathbb{C}$, let $(\lambda^{*},z^{*})\in\Lambda\times\C$, and let $n>0$ be an integer. 
An order $n$ holomorphic explosion of $E^*$ in $\C$ from $(\lambda^{*},z^{*})$ 
is a horizontally holomorphic map 
$H:\Lambda\times E^*\rightarrow\mathbb{C}$, which can be written as 
\begin{equation}\label{formHexp}
H(\la, z)=f(\la)+g(\la)\cdot\wtH(\la,z),
\end{equation} 
where $\wtH$ is a holomorphic motion of $E^*$ over $\La$ with base point $\la^*$, and $f, g:\La\rightarrow\mathbb{C}$ are holomorphic functions such that $f(\la^*)=z^*$ and $g$ admits $\la^*$ as a zero of order $n$. We call $\lambda^{*}$ \textit{the explosion parameter}. 
\end{defn}

Note that in the definition above 
\begin{itemize}
\item
The motion domain $\La$ can contain more than just one explosion parameter $\la^*$. 
\item Proposition~\ref{affinerelation} applies here. Similar to Corollary~\ref{Hsimpleform}, as an alternative to (\ref{formHexp}), $H$ can be represented as  
\begin{equation}\label{normalizedexplosion}
H(\la, z)= P(\lambda-\lambda^{*})+(\lambda-\lambda^{*})^n\cdot\whH(\lambda,z), 
\end{equation}
where $P(\lambda-\lambda^{*})$ is the $(n-1)$th Taylor polynomial of $f$ around $\lambda^{*}$ and $\whH:\La\times E^{*}\rightarrow\C$ is a holomorphic motion with base point $\lambda^{*}$ which is related to $\wtH$ by a holomorphically varying family of injective affine maps (compare (\ref{formofexplosioneq})).

\item The explosion parameter $\la^{*}$ can be infinity. 
In this case the formula (\ref{normalizedexplosion}) can be replaced by 
$$
H(\la, z)=P\left(\frac{1}{\la-\la'}\right)+\frac{1}{(\la-\la')^{n}}\cdot\whH(\la,z),
$$
where $\la'\notin\La$. 
In particular, if $0\notin\La$ we can take $\la'=0$ and have
\begin{equation*}\label{formHexpatinfty}
H(\la, z)=P\left(\frac{1}{\la}\right)+\frac{1}{\la^{n}}\cdot\whH(\la,z).
\end{equation*} 
\item 

Another more complicated, but also more flexible formulation of 
an order $n$ holomorphic explosion $H$ of the set $E^*$ is as follows. The map $H$ is a horizontally analytic map 
$H:\Lambda\times E^*\rightarrow\mathbb{C}$, 
which can be written as 

\begin{equation}\label{formHexptwo}
H(\la, z)=f(\la)+g(\la)\cdot\whH(\la,\psi(z)),
\end{equation} 
where $\whH$ is a holomorphic motion of a set $E_0\subset\C$ over $\La$ with base point $\la_0\in\La$, $\whH(\la^*,E_0) = E^*$, $\psi(z) = (\whH_{\la^*})^{-1}(z)$ and $f, g:\La\rightarrow\mathbb{C}$ are holomorphic functions such that $f(\la^*)=z^*$, $g$ admits $\la^*$ as a zero of order $n$ and $g(\la_0)\not=0$. 
\end{itemize}

A simple application of the classical $\la$-Lemma shows that a similar conclusion holds for holomorphic explosions, that is, 
the closure $\overline{E^*}$ of $E^*$ in $\mathbb{C}$ also exhibits a holomorphic explosion. 
\begin{lem}\label{lambdalemma}($\lambda$-Lemma for holomorphic explosions) Let $E^*\subset\mathbb{C}$ and let $\La\subsetneq\Chat$ be a domain. Suppose $H:\Lambda\times E^*\rightarrow\mathbb{C}$ is a holomorphic explosion from $(\lambda^{*},z^{*})$. Then $H$ uniquely extends to a holomorphic explosion from $(\lambda^{*},z^{*})$ of the closure $\overline{E^*}$ and parametrized by $\Lambda$.
\end{lem}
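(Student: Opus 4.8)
The plan is to transport the classical $\lambda$-Lemma through the representation \eqref{formHexp}. First I would fix a representation $H(\la,z)=f(\la)+g(\la)\cdot\whH(\la,z)$ on $\La\times E^*$, where $\whH$ is a holomorphic motion of $E^*$ over $\La$ with base point $\la^*$, and $f,g$ are holomorphic with $f(\la^*)=z^*$ and $g$ vanishing to order $n$ at $\la^*$. The classical $\lambda$-Lemma applies to $\whH$ alone and extends it uniquely to a holomorphic motion $\whH:\La\times\overline{E^*}\to\Chat$ of the closure, still with base point $\la^*$. Keeping the same $f$ and $g$, I would then \emph{define} the extension by
\[
H(\la,z):=f(\la)+g(\la)\cdot\whH(\la,z),\qquad (\la,z)\in\La\times\overline{E^*}.
\]
Because $f,g$ are unchanged, this map is horizontally holomorphic, satisfies $f(\la^*)=z^*$, has $g$ vanishing to order $n$ at $\la^*$, and $\whH$ is now a holomorphic motion of $\overline{E^*}$; hence, provided it is $\C$-valued, the extended $H$ is again an order $n$ holomorphic explosion of $\overline{E^*}$ from $(\la^*,z^*)$.

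I expect the main obstacle to be precisely this last proviso: the classical $\lambda$-Lemma a priori produces a motion valued in $\Chat$, so I must rule out that the extended $\whH$ acquires the value $\infty$ on $\overline{E^*}$ (which would force $H=\infty$ off the zeros of $g$). I would argue this by a normality/Hurwitz argument. Suppose $\whH(\la_1,z_0)=\infty$ for some $\la_1\in\La$ and $z_0\in\overline{E^*}\sm E^*$; note $\la_1\neq\la^*$ since $\whH(\la^*,z_0)=z_0\in\C$. Choosing $z_k\to z_0$ with $z_k\in E^*$, joint continuity of the motion gives $\whH^{z_k}\to\whH^{z_0}$ locally uniformly in the spherical metric near $\la_1$, so that $\whH^{z_k}$ is large and nonzero near $\la_1$ for large $k$ and $1/\whH^{z_k}\to 1/\whH^{z_0}$ uniformly near $\la_1$, with $1/\whH^{z_0}(\la_1)=0$. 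By Hurwitz's theorem the holomorphic functions $1/\whH^{z_k}$ must then vanish near $\la_1$ for large $k$, i.e.\ $\whH^{z_k}$ has a pole in $\La$, contradicting that $\whH$ is $\C$-valued on $E^*$. Thus the extended motion stays finite on $\overline{E^*}$ and the formula above defines a genuine $\C$-valued explosion.

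Existence having been settled, uniqueness follows from joint continuity and density. Every holomorphic explosion is jointly continuous on $\La\times\overline{E^*}$, since in a representation $f+g\cdot\whH$ the factors $f,g$ are continuous and holomorphic motions are jointly continuous \cite{masadsul1983}. If $H_1,H_2$ are two holomorphic-explosion extensions of $H$ to $\overline{E^*}$, they are jointly continuous and agree on $\La\times E^*$, which is dense in $\La\times\overline{E^*}$; hence $H_1=H_2$. In particular the constructed extension is independent of the non-unique choice of representation $(f,g,\whH)$, consistent with Remark~\ref{Hsimpleform}.
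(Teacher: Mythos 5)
Your proposal is correct and follows exactly the route the paper indicates (the lemma is stated there without a written proof, only the remark that it is ``a simple application of the classical $\lambda$-Lemma''): extend the underlying motion $\whH$ to $\overline{E^*}$ by the classical $\lambda$-Lemma and keep $f$ and $g$. Your Hurwitz argument ruling out the value $\infty$ on $\overline{E^*}$ and the continuity-plus-density uniqueness argument are exactly the details the paper leaves to the reader, and both are sound.
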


\section{Application I}

Consider the one parameter family of transcendental entire maps 
\begin{equation*}
f_a(z)=a(e^z(z-1)+1),\qquad a\in\mathbb{C}^*.
\end{equation*}
This family parametrizes the space of affine conjugacy classes 
of entire transcendental maps with two singular values, one of which coincides with a critical fixed point and the other being asymptotic. 

We denote by $A_a$ the basin of the superattracting fixed point $0$. 
And we denote by $A_a^0$ the immediate basin, that is the connected component of $A_a$ which contains $0$. 
The asymptotic value of $f_a$ is at $z=a$. 
The main hyperbolic component $\mathcal{C}^0$ in parameter space is the set of parameters for which the asymptotic value is contained in $A_a^0$, 
that is,
\begin{equation*}
\mathcal{C}^0=\{a,\;a\in A_a^0\}.
\end{equation*}
\begin{figure}[htb!]
 \begin{center}
\includegraphics[height=5cm,width=5cm]{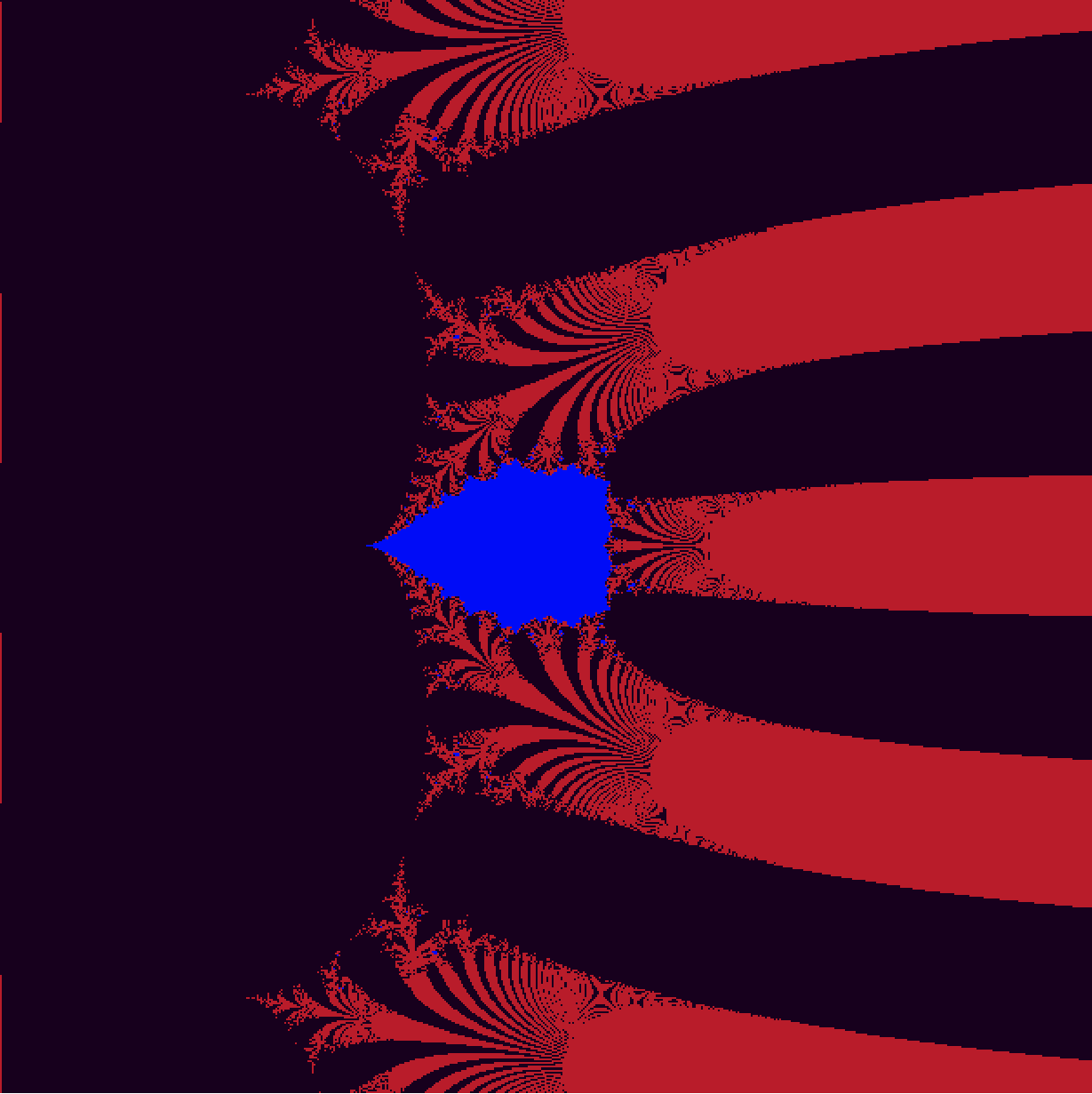}\qquad
\caption{\label{parameterplane_exp}\small{parameter plane in $[10,10]\times[10,10]$ - blue component in the center is $\mathcal{C}^0$.}}\label{parameterplane1}
\end{center}
\end{figure}
It is proven in \cite{deniz2013} that $\mathcal{C}^0$ is bounded, connected and $\mathcal{C}^0\cup\{0\}$ is simply connected.

\begin{figure}[htb!]
\begin{center}
\includegraphics[scale=.25]{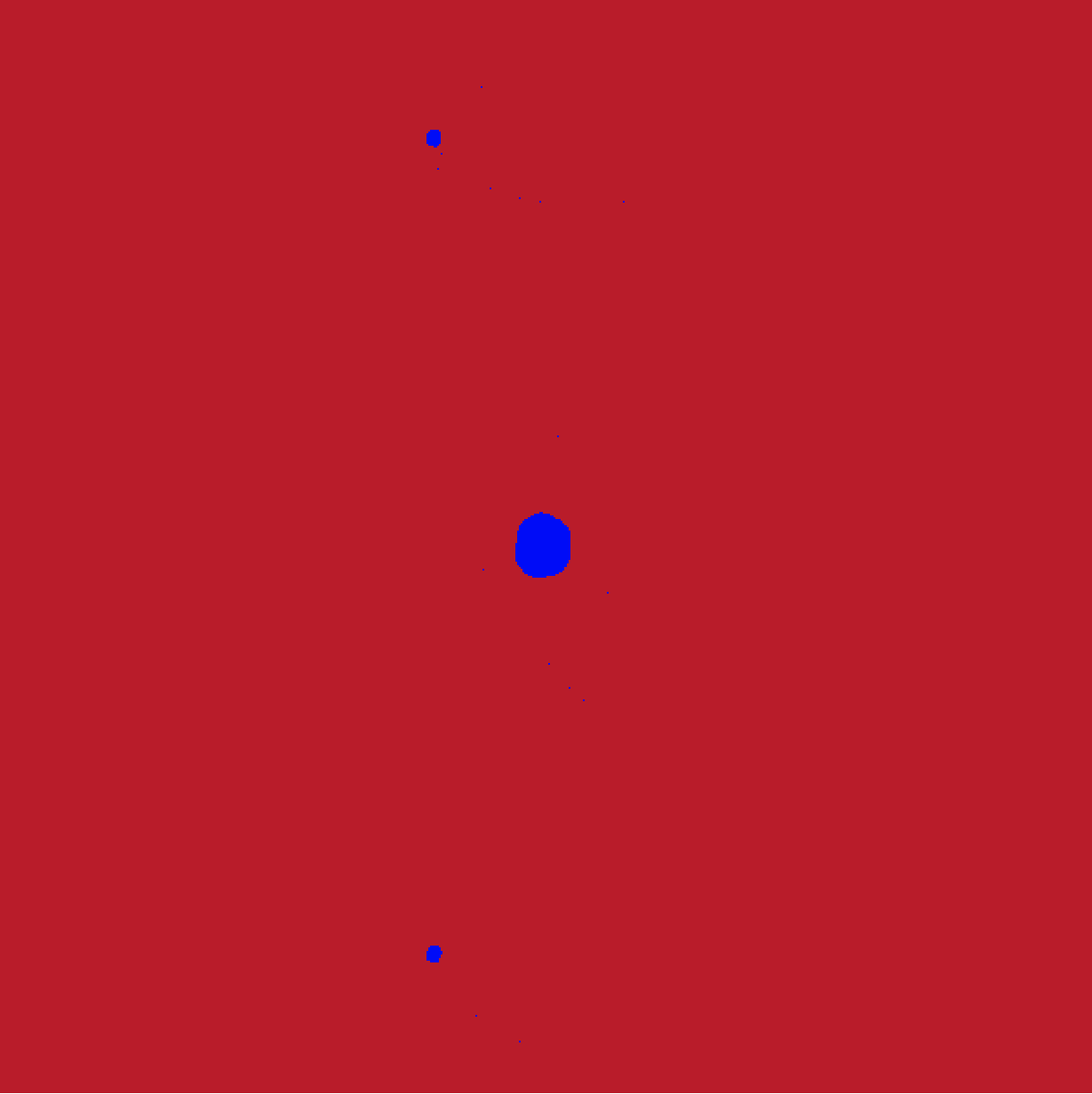}\qquad
\includegraphics[scale=.25]{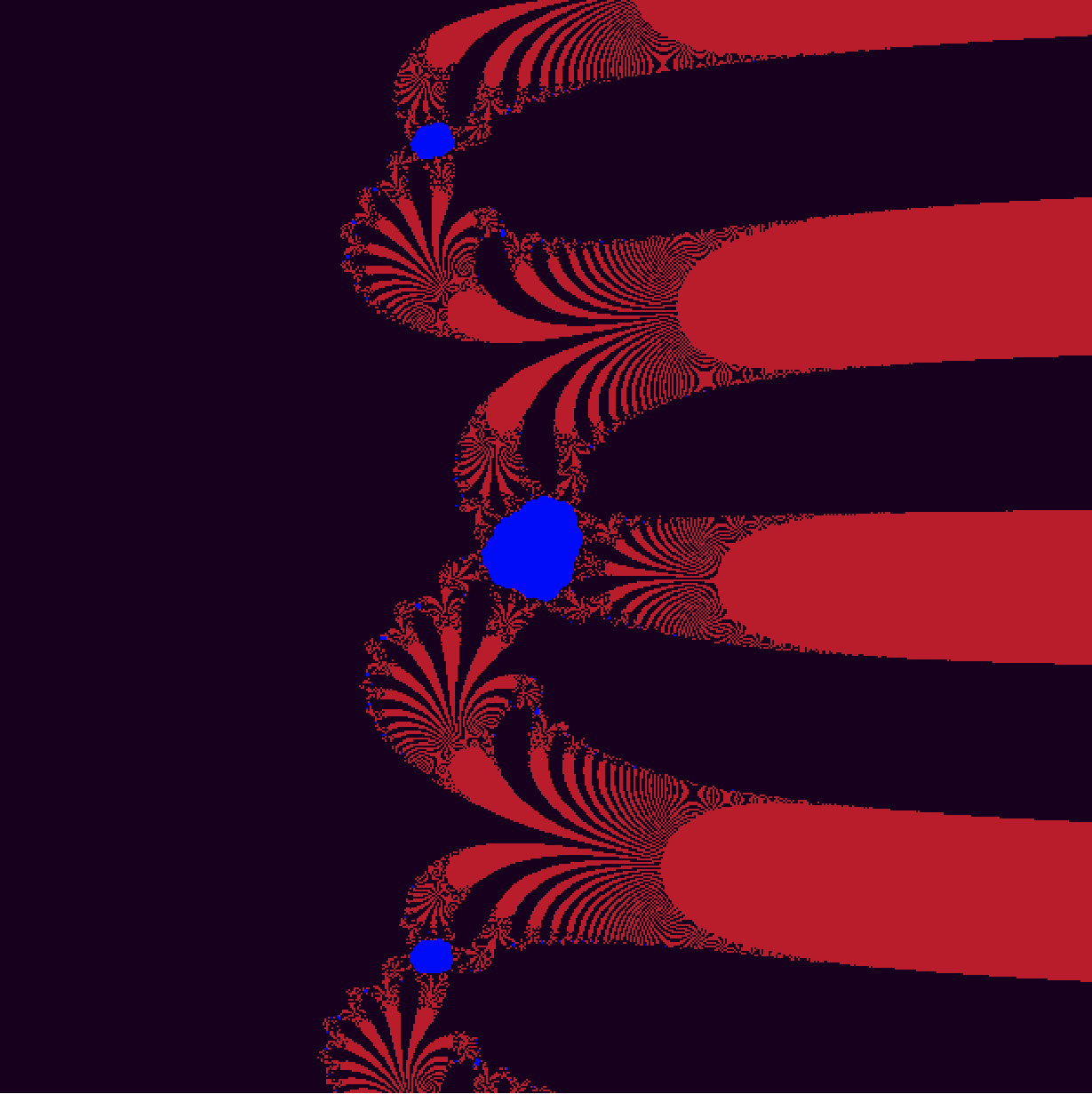}
\caption{\label{dynamicalplanes}\small{left: dynamical plane for $a=3.7+0.5i$; right: dynamical plane for $a=16.33+1.866i$ in $[10,10]\times[10,10]$ - blue component in the center is $A_a^0$. Both dynamical planes are generated with parameters from $\mathbb{C}\backslash\overline{\mathcal{C}^{0}}$.}}
\end{center}
\end{figure}

The B\"{o}ttcher coordinate $\phi_a$ -associated to the fixed point $z=0$- is the unique conformal conjugacy between $f_a$ and $z\mapsto z^2$ 
in some neighborhood of $0$. It has the form
\begin{equation}\label{bot1}
\phi_a(z)=\frac{a}{2}z+O(z^2).
\end{equation}
The conjugacy $\phi_a$ depends holomorphically on $(a,z)$ 
and extends to the whole immediate basin $A_a^0$ 
if and only if $a\notin A_a^0$, or equivalently $a\notin \CC^{0}$. 

Denote by $\psi_a$ the local inverse of $\phi_a$ mapping $0$ to $0$. 
Then for $a\notin\CC^{0}$ the map $\psi_a$ uniquely extends as a biholomorphic map $\psi_a:\D\to A_a^0$ which we denote a B\"{o}ttcher parameter. 

Let $\UU^{*}$ be the unbounded connected component of $\C\sm\overline{\CC^0}$. 
Then the B\"{o}ttcher parameters $\psi_a(z)$ depend holomorphically on $(a,z)\in\UU^*\times\D$.
Fix $a_0\in\UU^{*}$ and define a holomorphic motion $H$ of $A_{a_0}^0$, parametrized by $\UU^{*}$ and with base point $a_0$ 
via the B\"{o}ttcher coordinates and parameters as follows:
\begin{eqnarray}\label{holmotfortheimmediatebasin}
H:\mathcal{U}^{*}\times A_{a_0}^0&\rightarrow&\mathbb{C}\label{eq}\\
H(a,z)&=&\psi_a\circ\phi_{a_0}(z),\;\;\; \notag
\end{eqnarray}
so that $H(a,A_{a_0}^0)=A_a^0$. 
Observe that it follows from \eqref{bot1} that 
\begin{equation}\label{equationofH}
H(a,z)=\frac{a_0}{a}z+O(z^2).
\end{equation}
Moreover, note that $\infty$ is an isolated boundary point of the motion domain $\mathcal{U}^{*}$. Set $\mathcal{U}:=\mathcal{U}^{*}\cup\{\infty\}$.

Our first application of holomorphic explosions is to $H$ in (\ref{holmotfortheimmediatebasin}). Recall that a simply connected domain $A\subsetneq\C$ is called a $K$-quasidisk, if there exists a $K$-quasiconformal map 
$\psi:\C\to\C$ with $\psi(\D) = A$.
\begin{thm}\label{immediatebasinisaquasidisk} For $a\in \mathcal{U}^{*}$, $A_{a}^0$ is a $K$-quasidisk with  
$K:=e^{d_{\mathcal{U}}(\infty,a)}$, where $d_{\mathcal{U}}(\cdot,\cdot)$ denotes the hyperbolic distance in $\mathcal{U}$.

\end{thm}

This theorem strengthens a result in \cite{deniz2013}, where it is shown with a different approach that for all $a_0\in\mathcal{U}^{*}$ the domain $A_a^0$ is a quasidisk, see Figure \ref{dynamicalplanes}.

The proof uses that the holomorphic motion given by (\ref{holmotfortheimmediatebasin}) exhibits a holomorphic explosion at $\infty$. This is the content of the following proposition.

\begin{prop}\label{shrinkto0}
The holomorphic motion $H:\mathcal{U}^{*}\times A_{a_0}^0\rightarrow\C$ can be reparametrised in the vertical direction as a holomorphic explosion

\begin{equation*}
\HH:\mathcal{U}\times \D\rightarrow\C
\end{equation*}
from $(\infty,0)$ of order $1$. 
More precisely, there exists a holomorphic motion 
$\whH:\mathcal{U}\times \D\rightarrow\C$ with base point $\infty$ such that 
$$
H(a,\psi_{a_0}(z)) = \HH(a,z) = \frac{2}{a}\cdot\whH(a,z), 
$$
where $\whH_{a_0} = \psi_{a_0}$ is conformal.
\end{prop}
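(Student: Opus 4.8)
The plan is to reduce everything to the behaviour of the \Bottcher{} parameters $\psi_a$ under the rescaling $z\mapsto\tfrac a2 z$, and to read off the explosion directly from the resulting renormalization limit. First I would perform the vertical reparametrization. Since $\psi_{a_0}:\D\to A_{a_0}^0$ is a biholomorphism with $\phi_{a_0}=\psi_{a_0}^{-1}$, composing $H$ on the right with $\psi_{a_0}$ gives
$$
\HH(a,z):=H(a,\psi_{a_0}(z))=\psi_a\circ\phi_{a_0}\circ\psi_{a_0}(z)=\psi_a(z).
$$
Thus $\HH(a,\cdot)=\psi_a$ is a conformal (hence injective) map of $\D$ for every $a\in\UUstar$, and $a\mapsto\psi_a(z)$ is holomorphic for each fixed $z$; so $\HH$ is a holomorphic motion of the \emph{connected} set $E=\D$ over $\UUstar$ (it is $H$ up to the fixed vertical coordinate change $\phi_{a_0}$, so \thmref{thmextension} and \compref{complementformofH} apply). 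The goal then reduces to showing that $\HH$ extends horizontally holomorphically across the isolated boundary point $a=\infty$ and that all fibres collapse to $0$ there, which by \corref{corollaryz_0z_1}(ii) is precisely a holomorphic explosion from $(\infty,0)$; the order and the factor $\tfrac2a$ are afterwards pinned down by \eqref{bot1}.

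The heart of the matter is a renormalization. Writing $\zeta=2/a$ and conjugating by $w=\tfrac a2 z$, set $\widetilde f_a(w):=\tfrac a2 f_a(\tfrac2a w)$. A short computation (using $e^u(u-1)+1=\sum_{k\ge2}\tfrac{k-1}{k!}u^k$) gives $\widetilde f_a(w)=w^2+\tfrac23\zeta w^3+\cdots$, an entire family depending holomorphically on $(\zeta,w)$ with $\widetilde f_a\to w^2$ locally uniformly as $a\to\infty$. The map $\widetilde\psi_a:=\tfrac a2\psi_a$ conjugates $w\mapsto w^2$ to $\widetilde f_a$ and, by \eqref{bot1} (which gives $\psi_a(z)=\tfrac2a z+O(z^2)$), is normalized by $\widetilde\psi_a(0)=0$ and $\widetilde\psi_a'(0)=1$; that is, $\widetilde\psi_a$ is the \Bottcher{} parameter of $\widetilde f_a$ on all of $\D$ for $a\in\UUstar$. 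I then define $\whH(a,z):=\widetilde\psi_a(z)=\tfrac a2\psi_a(z)$, so that $\HH(a,z)=\tfrac2a\whH(a,z)$ holds by construction.

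It remains to show that $\whH$ extends to a holomorphic motion over $\UU=\UUstar\cup\{\infty\}$ with base point $\infty$ and $\whH_\infty=\Id$. Because the local \Bottcher{} coordinate of $\widetilde f_a$ near $0$ depends holomorphically on $\zeta$ up to $\zeta=0$, where $\widetilde f_0(w)=w^2$ has \Bottcher{} parameter $\Id$, we obtain $\widetilde\psi_a\to\Id$ uniformly on a neighbourhood of $0$. Each $\widetilde\psi_a$ lies in the compact (normal) class $S$ of normalized univalent maps of $\D$, so the family is locally bounded, and Vitali's theorem upgrades the convergence to $\widetilde\psi_a\to\Id$ locally uniformly on all of $\D$. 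Hence, for each fixed $z\in\D$, the holomorphic function $\zeta\mapsto\whH(2/\zeta,z)$ is bounded near $\zeta=0$ and extends holomorphically across it (Riemann's removable singularity theorem) with value $z$. This gives horizontal holomorphy of $\whH$ on $\UU$; together with vertical injectivity (each $\widetilde\psi_a$, and $\Id$, is injective) and $\whH_\infty=\Id$, it makes $\whH:\UU\times\D\to\C$ a holomorphic motion with base point $\infty$. Reading off $\HH(a,z)=\tfrac2a\whH(a,z)$ with $f\equiv0$ (so $f(\infty)=z^{*}=0$) and $g(a)=2/a$, which has a simple zero at $\infty$ in the coordinate $1/a$, identifies $\HH$ as an order-$1$ holomorphic explosion from $(\infty,0)$, with $\whH_{a_0}=\tfrac{a_0}2\psi_{a_0}$ conformal, as asserted. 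The main obstacle is exactly this passage to the degenerate parameter $a=\infty$: one must control the \emph{global} \Bottcher{} parameter on the whole disk uniformly as $a\to\infty$ and rule out any degeneration of the univalent maps $\widetilde\psi_a$, which is what the normalization $\widetilde\psi_a'(0)=1$, the compactness of class $S$, and Vitali's theorem together secure.
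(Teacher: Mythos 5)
Your proof is correct and takes essentially the same route as the paper: both define $\whH(a,z)=\tfrac a2\psi_a(z)$, use the class-$S$ normalization $\whH(a,0)=0$, $\partial_z\whH(a,0)=1$ together with Koebe/compactness to extend horizontally holomorphically across $\infty$, and identify $\whH_\infty=\Id$ via the renormalization $\tfrac a2 f_a(\tfrac{2w}{a})\to w^2$. (Your observation that the correct normalization is $\whH_{a_0}=\tfrac{a_0}{2}\psi_{a_0}$ rather than $\psi_{a_0}$ is also consistent with the paper's own construction.)
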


\begin{proof}
Define $\whH : \UU^*\times\D \to \C$ by $\whH(a,z) := \frac{a}{2}H(a,\psi_{a_0}(z)) = 
\frac{a}{2}\psi_a(z) =: \whpsi_a(z)$. 
We shall show that $\whH$ extends to a holomorphic motion 
of $\D$ over $\UU$ with base point $\infty$. Then as $a\mapsto \frac{2}{a}$ has a simple zero at $\infty$, the proposition will follow.

Notice first that $\whH$ is a holomorphic function of the pair of variables $(a,z)$ 
and that it is vertically univalent with 
$$\whH(a,0) = 0\qquad\textrm{and}\qquad 
\frac{\partial}{\partial z}\whH(a,z)|_{z=0} = \whpsi'_a(0)= 1.
$$
By the Koebe distortion estimates for univalent maps, 
we have 
$$
\forall z\in\D, \forall a\in\UU^*\;\;\; 
|\whpsi_a(z)| \leq \frac{|z|}{(1-|z|)^2}
$$
so that each map $\whH^z$ is uniformly bounded near $\infty$ and so extends holomorphically to $\UU$. That is, $\whH$ extends horizontally holomorphically to $\infty$. Moreover, the univalent functions on $\D$ fixing $0$ with derivative $1$ form a compact family. 
It follows that the extension $\whpsi_\infty := \whH_\infty :\D\to \C$ is univalent. Thus, the extension is also vertically injective. In order to show that $\whH$ is a holomorphic motion with base point $\infty$ we need only to show that $\whpsi_\infty = Id$, or equivalently that the functions $\whpsi_a$ converge uniformly to the identity, as $a\to\infty$. To this end, note that $\whpsi_a$ conjugates $z\mapsto z^2$ to 
$$
g_a(z) := \frac{a}{2}\cdot f_a\left(\frac{2z}{a}\right) = z^2 + \OO(|4z^3/3a|).
$$
In fact, a standard but tedious computation shows that
$$
|g_a(z)-z^2| \leq \left|\frac{4z^3}{3a}\right|e^{|2z/a|}.
$$
So $g_a$ converges to $g_\infty(z) := z^2$ locally uniformly in $\C$. 
By continuity, $\whpsi_\infty$ is the B\"{o}ttcher parameter for $g_\infty$ at $0$ 
and hence $\whpsi_\infty = Id$. 
This completes the proof.
\end{proof}

\begin{proof}[Proof of Theorem \ref{immediatebasinisaquasidisk}]
The holomorphic motion $\whH$ extends to a holomorphic motion of $\Dbar$ 
by the $\lambda$-Lemma and to a holomorphic motion of 
the Riemann sphere $\Chat$ by S{\l}odkowski's Theorem. 
We will denote this extension by $\widehat{H}$ as well.
By the general properties of holomorphic motions 
over a simply connected domain such as $\UU$, 
the real dilatation of the vertical mapping $\whH_a$ is bounded by 
$K_a := e^{d_\UU(a,\infty)}$, when $\infty\in\UU$ is the base point. 
Thus, $A_a^0 = \frac{2}{a}\whH(a,\D)$ is a $K_a$-quasidisk.
\end{proof}
\begin{rem}
The holomorphic explosion $\HH$ extends uniquely to the holomorphic explosion $\HH:\mathcal{U}\times \overline{\D}\rightarrow\C$, by Lemma \ref{lambdalemma}.
\end{rem}

Observe that the solutions of the equation $f_a(z)=0$ do not depend on the parameter $a$. Also, simple calculations show that there are no two solutions with the same imaginary part. These observations allow us to label solutions of $f_a(z)=0$ as $z_j$, $j\in\mathbb{Z}$, ordered in increasing order of $\Im(z_j)$ with $z_0=0$. Let $A_a^{1,j}$ denote the connected component of $f_a^{-1}(A_a^{0})$ which contains $z_j$. We call the point $z_j$, \textit{the center of $A_a^{1,j}$}.

\begin{prop}
For each $j\in\Z\sm\{0\}$, there is a holomorphic explosion $\HH^{j}:\UU\times\Dbar\to\C$ from $(\infty,z_j)$ of order $2$ with 
$$
\HH^{j}(a,\Dbar) = \overline{A}_a^{1,j}.
$$
In particular, all of the components $A_a^{1,j}$ of the basin of $0$ for $f_a$ are also $K$-quasidisks with $K = e^{d_{\UU}(\infty,a)}$.

\end{prop}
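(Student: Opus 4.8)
The plan is to realize each $A_a^{1,i}$ as the conformal image of $\D$ under an inverse branch of $f_a$ composed with the B\"{o}ttcher parameter $\psi_a$ of $A_a^0$, and then to rescale this Riemann map to produce the underlying holomorphic motion with base point $\infty$, exactly paralleling the order $1$ case of Proposition~\ref{shrinkto0}. First I would record the relevant function theory of $f_a$. Since $f_a'(z)=aze^z$, the only critical point is the superattracting fixed point $z=0$, and the only singular values are the critical value $0$ and the asymptotic value $a$; for $a\in\UU^*$ the point $a$ lies outside $\overline{A_a^0}$ (as follows from the structure of $\CC^0$ established in \cite{deniz2013}). Consequently, for $i\neq 0$ the component $A_a^{1,i}$ contains no critical point and $f_a:A_a^{1,i}\to A_a^0$ is proper onto the bounded simply connected domain $A_a^0$; being unbranched over a simply connected target, it is a conformal isomorphism. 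Let $\chi_{a,i}:A_a^0\to A_a^{1,i}$ denote its inverse, so $\chi_{a,i}(0)=z_i$, and $\chi_{a,i}$ depends holomorphically on $a$ by the implicit function theorem since $f_a'(z_i)=az_ie^{z_i}\neq 0$. Setting $\HH^i(a,z):=\chi_{a,i}(\psi_a(z))$ then gives a conformal map of $\D$ onto $A_a^{1,i}$ with $\HH^i(a,0)=z_i$ and the defining identity $f_a(\HH^i(a,z))=\psi_a(z)$.

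Next, guided by this identity together with $f_a(z_i+\zeta)=az_ie^{z_i}\zeta+O(\zeta^2)$ and $\psi_a(z)=\tfrac{2}{a}\whH(a,z)$, I would put
$$
\whH^i(a,z):=\frac{a^2z_ie^{z_i}}{2}\bigl(\HH^i(a,z)-z_i\bigr),\qquad f(a):=z_i,\qquad g(a):=\frac{2}{a^2z_ie^{z_i}},
$$
so that $\HH^i(a,z)=f(a)+g(a)\,\whH^i(a,z)$, where $f(\infty)=z_i$ and $g$ is holomorphic and nonvanishing on $\UU^*$ with a zero of order $2$ at $\infty$. Each $\whH^i_a$ is univalent, being a composition of $\psi_a$, $\chi_{a,i}$ and an affine map, and a direct computation gives $\whH^i(a,0)=0$ and $\partial_z\whH^i(a,z)|_{z=0}=1$. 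By the K{\oe}be distortion theorem the family $\{\whH^i_a\}$ is locally uniformly bounded on $\D$, so $\whH^i$ is horizontally holomorphic on $\UU^*$ and each $\whH^i(\cdot,z)$ extends holomorphically across $\infty$ by removability, just as in Proposition~\ref{shrinkto0}.

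It then remains to identify the base-point fibre. Substituting $w=\HH^i(a,z)=z_i+g(a)\whH^i(a,z)$ into $f_a(w)=\psi_a(z)$ and Taylor expanding $f_a$ at $z_i$ yields $\tfrac{2}{a}\whH(a,z)=\tfrac{2}{a}\whH^i(a,z)+O(1/a^3)$, hence $\whH(a,z)=\whH^i(a,z)+O(1/a^2)$ locally uniformly on $\D$. Since $\whH(a,z)\to z$ as $a\to\infty$ by Proposition~\ref{shrinkto0}, we conclude $\whH^i_\infty=\Id$. Thus $\whH^i$ is a holomorphic motion of $\D$ over $\UU$ with base point $\infty$, and the displayed formula exhibits $\HH^i$ as an order $2$ holomorphic explosion from $(\infty,z_i)$ with $\HH^i(a,\D)=A_a^{1,i}$.

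Finally, the quasidisc statement follows verbatim from the proof of Theorem~\ref{immediatebasinisaquasidisk}: extend $\whH^i$ to $\Dbar$ by the $\lambda$-Lemma and to $\Chat$ by S{\l}odkowski's Theorem, so that each $\whH^i_a$ is $K_a$-quasiconformal with $K_a=e^{d_\UU(\infty,a)}$; since $A_a^{1,i}=z_i+g(a)\,\whH^i(a,\D)$ is an affine, hence conformal, image of the $K_a$-quasidisc $\whH^i_a(\D)$, it is itself a $K_a$-quasidisc, and $\HH^i(a,\Dbar)=\overline{A}_a^{1,i}$ by Lemma~\ref{lambdalemma}. I expect the main obstacle to lie entirely in the first paragraph, namely verifying that $f_a$ restricts to a genuine conformal isomorphism $A_a^{1,i}\to A_a^0$ --- properness and degree one --- which rests on the transcendental function theory of $f_a$ (the location of its single critical point and of the asymptotic value $a$ relative to $\overline{A_a^0}$) rather than on the machinery of holomorphic explosions; the subsequent rescaling and the limit computation $\whH^i_\infty=\Id$ are the \emph{standard but tedious} analogue of Proposition~\ref{shrinkto0}.
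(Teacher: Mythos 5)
Your proposal is correct and follows essentially the same route as the paper: both define $\HH^i(a,z)=k_a^i\circ\HH(a,z)$ using the inverse branch of $f_a$ sending $0$ to $z_i$ and read off the order $2$ from the expansion $z_i+\frac{2}{a^2 z_i e^{z_i}}\whH(a,z)+O(a^{-4})$. You simply supply more detail than the paper does (properness of $f_a:A_a^{1,i}\to A_a^0$, the Koebe bound, and the verification that $\whH^i_\infty=\Id$), all of which is consistent with the paper's terse computation.
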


\begin{proof}
For each $a\in\UU^*$ and $j\in\Z\sm\{0\}$ let 
$k_{a}^{j}$ be the inverse branch of $f_a$ defined on $\overline{A}_a^0$, which satisfies $k_{a}^{j}(0)=z_j$. 
And define 
\begin{eqnarray*}
\HH^j:\UU\times \D&\rightarrow&\C\\
\UU^*\times\D\ni(a,z)&\mapsto&k_a^{j}\circ \mathcal{H}(a,z)\\
\{\infty\}\times\D\ni(\infty,z)&\mapsto&z_j.
\end{eqnarray*}
This gives the desired holomorphic explosions. From the series form of $\HH^{j}$ near $0$
\begin{equation*}
\mathcal{H}^{j}(a,z)=k_a^{j}(\mathcal{H}(a,z))=z_j+\frac{2}{a^{2}e^{z_j}z_j}\widehat{H}(a,z)+O\left(\frac{1}{a^{4}}\right),
\end{equation*}
we see that the order is $2$. Here $\whH$ is as in \propref{shrinkto0}.
\end{proof}

\section{Application II }

We are interested in the moduli space $\MMtwo$ of quadratic rational maps modulo 
{\Mobius} conjugacy. Every quadratic rational map $R$ has three 
fixed points counting with multiplicity, with multipliers which we generically denote by $\la, \mu$ and $\ga$. 
Denote by $\si_1(R) = \la +\mu + \ga$, $\si_2(R) = \la\mu+\la\ga+\mu\ga$ 
and $\si_3 = \la\mu\ga$ the three elementary symmetric functions of $\la$, $\mu$ and $\ga$.
Following Milnor \cite{milnor1993}, we endow $\MMtwo$ with the affine structure given by $(\si_1,\si_2)$. In more detail 
$\si_3(R) = \si_1(R)-2$, and any 
pair of numbers $(\si_1,\si_2)\in\C^2$ defines a unique equivalence class in $\MMtwo$, so that $(\si_1,\si_2):\MMtwo\to\C^2$ is biholomorphic  \cite[Lemma 3.1]{milnor1993}. For each $\la\in\C$ the curve 
$$
\Per_1(\la) = \{\; [R]\; \bigm|\; R~\textrm{has a fixed point of multiplier}~\la\;\}.
$$
is a straight line in this affine structure, \cite[Lemma 3.4]{milnor1993}. Moreover, $\si(\la,R):= \mu(R)\ga(R)$ defines an isomorphism between $\Per_1(\la)$ and $\C$ \cite[Remark 6.9]{milnor1993}. 

For $\la\in\D$ we consider the connectedness locus $\Mla$ in $\Per_1(\la)$
$$
\Mla = \{\; [R] \in\Per_1(\la)\;\bigm|\; J_R \textrm{ is connected }\},
$$
where $J_R$ denotes the Julia set of $R$. For each $R$ with $[R]\in \Mla$, there exists an isomorphism $\psi$ from the complement of the filled in Julia set to $\widehat{\mathbb{C}}\backslash\overline{\mathbb{D}}$ satisfying $\psi(\infty)=\infty$. Then the conjugate map $\psi\circ R\circ\psi^{-1}=:B_{\la}:\widehat{\mathbb{C}}\backslash\overline{\mathbb{D}}\rightarrow \widehat{\mathbb{C}}\backslash\overline{\mathbb{D}}$ is a quadratic Blaschke product for which $\infty$ is a fixed point with multiplier $\la$. Possibly post-composing $\psi$ with a rigid rotation, we obtain $B_\la(z) = z\frac{z+\labar}{1+\la z}$. The restriction of $B_{\la}$ to the  the unit circle is called a representative of the external class of $R$.

The special line $\Per_1(0)$ also equals the moduli space for quadratic polynomials 
$$
\Per_1(0) = \{\; [Q_c]\;\bigm|\; Q_c(z) = z^2 + c, c\in\C\;\}
$$
for which $\si(Q_c)= 4c$, so that $\si(\Mbrot_0) = 4\cdot\Mbrot$, where $\Mbrot$ is the Mandelbrot set. 

\begin{thm}\label{MQRapp}
There is a natural dynamically defined holomorphic motion 
{\mapfromto H {\D\times(4\cdot\Mbrot)} \C} over $\D$ with base point $0$ 
of the Mandelbrot set scaled by a factor $4$ such that:
\ENUM
\item
For each $\la\in\D$ : $H(\la,\si(0,\Mbrot_0)) = \si(\la, \Mla)$.
\item
For each $c\in\Mbrot$ and $\la\in\mathbb{D}$, any quadratic rational map $R\in\si_{\la}^{-1}(H(\la,4c))$ 
has a polynomial like restriction, hybridly equivalent to a restriction of $Q_c$. 
\ENDENUM
\end{thm}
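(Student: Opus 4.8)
The plan is to realize $H$ dynamically through a quasiconformal surgery that, for each $c\in\Mbrot$ and each $\la\in\D$, glues the internal dynamics of $Q_c$ to the external Blaschke dynamics $B_\la(z)=z\frac{z+\labar}{1+\la z}$. Since $B_0(z)=z^2$ is exactly the external class of $Q_c$, no surgery is needed at $\la=0$, so the construction is automatically anchored at the base point. First I would fix $c\in\Mbrot$, so that the filled Julia set $K_c$ is connected, and build a degree-two quasiregular map $F_\la$ of $\Chat$ agreeing with $Q_c$ on a neighbourhood of $K_c$ and with $B_\la$ on the complementary region, the two matched across an annulus using the \Bottcher{} coordinate of $Q_c$ on $\C\sm K_c$ and a \Bottcher/linearising coordinate for $B_\la$. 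Spreading the standard complex structure around the grand orbit of the gluing annulus produces an $F_\la$-invariant Beltrami coefficient $\mu_\la$ with $\|\mu_\la\|_\infty<1$, supported away from the fixed point inherited from $B_\la$ of multiplier $\la$.

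Next I would integrate $\mu_\la$. By the Measurable Riemann Mapping theorem with holomorphic dependence on parameters (Ahlfors--Bers) there is a quasiconformal $\Psi_\la$, depending holomorphically on $\la$ after a normalisation, whose Beltrami coefficient is $\mu_\la$, and $R_{\la,c}:=\Psi_\la\circ F_\la\circ\Psi_\la^{-1}$ is then a genuine quadratic rational map, canonical up to {\Mobius}-conjugacy. Because the surgery annulus avoids the $B_\la$-fixed point, $\Psi_\la$ is conformal there and the multiplier $\la$ is preserved, so $R_{\la,c}\in\Per_1(\la)$, while $R_{0,c}=Q_c$. I would then set
$$
H(\la,4c):=\si\bigl(\la,[R_{\la,c}]\bigr).
$$
Horizontal holomorphicity in $\la$ follows from the holomorphic dependence of $\Psi_\la$ together with holomorphy of $\si$, and $H(0,4c)=\si(0,Q_c)=4c$ gives the base-point condition. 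For vertical injectivity, if $H(\la,4c)=H(\la,4c')$ then $[R_{\la,c}]=[R_{\la,c'}]$, since $\si$ is a coordinate on $\Per_1(\la)$; their quadratic-like restrictions are then simultaneously hybrid equivalent to $Q_c$ and to $Q_{c'}$, forcing $c=c'$ by injectivity of straightening on $\Mbrot$.

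For the first conclusion I would invoke the Douady--Hubbard theory of polynomial-like maps applied to the family $\Per_1(\la)$: with external class $B_\la$, every $[R]\in\Mla$ carries a quadratic-like restriction whose straightening is a unique $Q_c$ with $c\in\Mbrot$, and this straightening is exactly inverted by the surgery above. Hence $R_{\la,c}\in\Mla$ for every $c\in\Mbrot$ and every element of $\Mla$ arises this way, so $H(\la,4\cdot\Mbrot)=\si(\la,\Mla)$. The second conclusion is built in: any $R\in\si_\la^{-1}(H(\la,4c))$ is {\Mobius}-conjugate to $R_{\la,c}$ and therefore inherits a polynomial-like restriction hybrid equivalent to $Q_c$.

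The main obstacle is the holomorphic, not merely continuous, dependence on $\la$: the surgery must be arranged so that $\mu_\la$ varies holomorphically in $\la$, after which Ahlfors--Bers upgrades this to holomorphy of $R_{\la,c}$, and one must check this uniformly as $c$ approaches $\bdry\Mbrot$. A safer fallback, should uniform holomorphicity of the surgery prove delicate there, is to define $H$ first on the hyperbolic centres and Misiurewicz points of $4\cdot\Mbrot$, where the moving point $\si(\la,\cdot)$ satisfies explicit holomorphic equations in $(\la,\si)$ so that horizontal holomorphicity is transparent, to verify the holomorphic-motion axioms on this dense set, and then to extend to all of $4\cdot\Mbrot$ by the classical $\la$-Lemma, recovering the dynamical statement on the closure by continuity and density.
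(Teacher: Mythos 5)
Your route is genuinely different from the paper's. The paper does \emph{not} construct the motion from scratch: it imports from Uhre's thesis (\thmref{thmevamot}) a dynamically defined holomorphic motion $\AAA$ of $\whMbrot_{e^{-1}}$ over the \emph{punctured} disk $\Dstar$, transports it by the affine coordinates $\sibf_\la$ to a motion $\wtH$ of $\si_{e^{-1}}(\Mbrot_{e^{-1}})$, and then observes that the center and the root give two constant trajectories $0$ and $1$. Since these extend holomorphically to the puncture with distinct values, \corref{corollaryz_0z_1} (the paper's main extension theorem) upgrades $\wtH$ to a holomorphic motion over all of $\D$; the identification $E=4\cdot\Mbrot$ is then done by letting $R_{\la,\mu}\to \mu z+z^2$ as $\la\to 0$ together with \propref{general_straightening}. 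The whole point of the section is to showcase that extension-across-a-puncture mechanism, which your proposal bypasses entirely by attempting a direct mating of $Q_c$ with $B_\la$ over all of $\D$ at once.

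As a standalone argument your proposal has a genuine gap exactly at the step you flag as the ``main obstacle,'' and it is not a removable technicality by the means you suggest. The Blaschke product $B_\la(z)=z\frac{z+\labar}{1+\la z}$ depends \emph{antiholomorphically} on $\la$ (it involves $\labar$, as it must to preserve the circle), so the glued quasiregular map $F_\la$ and hence the invariant Beltrami coefficient $\mu_\la$ cannot be arranged to depend holomorphically on $\la$ in any naive way; Ahlfors--Bers then only yields real-analytic, not holomorphic, dependence of $\Psi_\la$ and of $[R_{\la,c}]$. Establishing that $\la\mapsto\si(\la,[R_{\la,c}])$ is nevertheless holomorphic is precisely the nontrivial content of Uhre's construction that the paper cites rather than reproves. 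Your fallback does not repair this: the closure of the set of hyperbolic centres and Misiurewicz parameters is not all of $\Mbrot$ (it omits non-centre interior points of hyperbolic components and any queer components), so the classical $\la$-Lemma applied to that dense-in-the-boundary-but-not-in-$\Mbrot$ set does not produce a motion of all of $4\cdot\Mbrot$; moreover conclusion (2), hybrid equivalence to $Q_c$, is not a condition that passes to limits ``by continuity,'' so it would not transfer to the points added by closure. The remaining ingredients of your argument (injectivity via uniqueness of straightening, surjectivity via Douady--Hubbard, and the triviality of the surgery at $\la=0$) are sound and mirror the paper's use of \cite{douadyandhubbard}.
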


For $\la\in\Cstar$ and $B\in\C$ the quadratic rational map 
$z\mapsto \frac{1}{\la}(z+B+1/z)$ admits $\infty$ 
as a fixed point with multiplier $\la$, and thus represents an element of $\Per_1(\la)$. 
Moreover, the maps $z\mapsto \frac{1}{\la}(z+B+1/z)$ and $z\mapsto \frac{1}{\la}(z-B+1/z)$ are conjugate by $z \mapsto -z$, and thus represent the same element of $\Per_1(\la)$. 
Therefore, for each $\la\in\Cstar$ and $A\in\C$ such that $B=\sqrt{A}$, the map
$$
(\la,A)\mapsto [\GlaA], \qquad \GlaA(z) := \frac{1}{\la}(z+\sqrt{A}+1/z)
$$
is well defined without specifying which square root is being used. When a specific choice of square root is relevant, it will be clear from the context how it is chosen. The maps $\GlaA$ admits $\pm 1$ as critical points, and a simple computation 
shows that the product of the two remaining fixed point eigenvalues is given by 
\REFEQN{sigmaArelation}
\sibf_{\la}(A) = \sibf(\la,A) = \si(\lambda,[\GlaA])=\si_{\la}([\GlaA]) = \frac{(\la-2)^2-A}{\la^2}
\ENDEQN
\cite[page 73, column 1]{milnor1993}(Milnor uses the symbol $\tau$ for $\si$, $a^2$ for $A/\la^2$ and $\mu$ for $\la$).
It follows that for each fixed $\la\in\Cstar$ the map
$$
A\mapsto [\GlaA] : \C \longrightarrow \Per_1(\la)
$$
is an isomorphism. We denote by $\Ala$ the inverse of this isomorphism and for $\la\in\Dstar$ we write 
\begin{equation*}
\whMbrot_\la := \Ala(\Mbrot_\la)
\end{equation*}
for the connectedness locus of the family $\GlaA, A\in\C$. Equivalently $\whMbrot_\la$ is the set of parameters $A$ for which, either $1$ or $-1$ has bounded orbit under $\GlaA$. Note that for any $\la$ the map $G_{\la,0}$ is conjugate to itself under $z\mapsto -z$ and thus, $0\notin\whMbrot_\la$ for any $\la\in\Dstar$. In Figure \ref{connectednessloci}, connectedness loci are shown for different $\lambda$ values. 

\begin{figure}[htb!]
\begin{center}
\includegraphics[height=7cm,width=7cm]{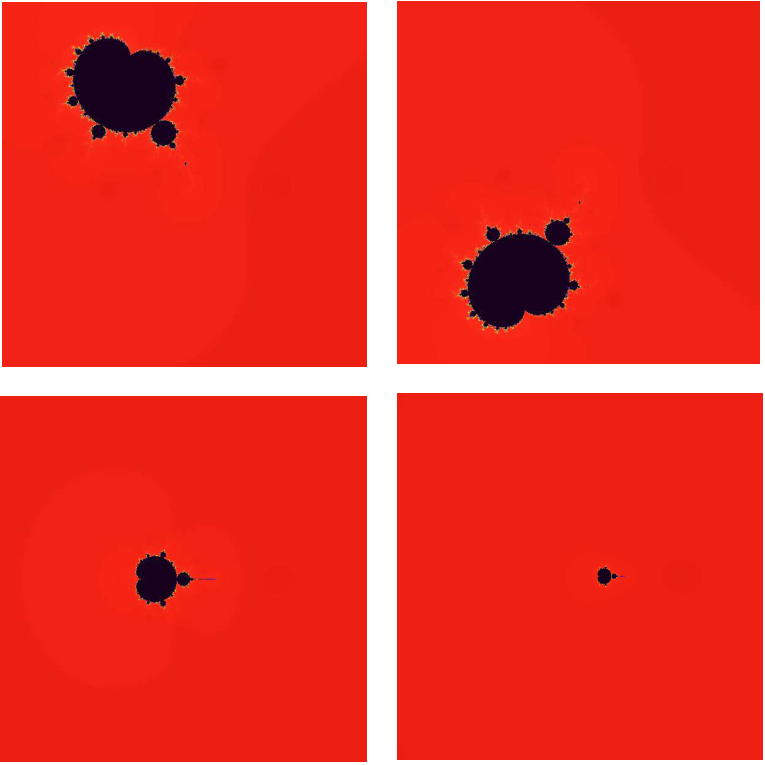}
\caption{\label{connectednessloci}{Connectedness loci in $A$-plane with $\lambda$ parameter values top left: $0.4-0.35i$; top right $0.4+0.35i$; bottom left $e^{-1}$; bottom right $0.2+0.2i$, in $[1,5]\times[-2,2]$.}}
\end{center}
\end{figure}

Since the external class of the maps $R$ with $[R]\in\Per_1(\la)$ is $B_\la$, it follows from Yoccoz inequality for polynomial-like maps, \cite[Cor. D2]{Petersen} that for any $r<1$ the map $\si_{\la}$ is bounded on 
$$
\bigcup_{\la, ||\la|| < r}\Mbrot_\la.
$$
It thus follows immediately from (\ref{sigmaArelation}) that $\textbf{A}_\la$ converge to $4$ uniformly on $\Mbrot_\la$, when $\la\to 0$. 

Holomorphic motion of the connectedness loci $\whMbrot_{\la}$ with the parameter $\lambda$ was the subject of the master's thesis of Uhre \cite{uhre2004}. We adopt the following result from her thesis. The interested reader can see \cite[Chapter 8.1-8.3]{uhre2004} for a proof. 

\begin{thm}\label{thmevamot}
There exists a dynamically defined holomorphic motion of the connectedness locus $\whMbrot_{e^{-1}}$ 
over $\Dstar$ with base point $e^{-1}$:
$$
\mathbb{A}: {\Dstar\times\whMbrot_{e^{-1}}}\rightarrow\C\qquad\textrm{with}\qquad
\mathbb{A}_\la(\whMbrot_{e^{-1}})=\whMbrot_{\lambda}.
$$

More precisely, for each fixed pair $(\la, A)$ with $\lambda\in\Dstar$ and $A\in\whMbrot_{e^{-1}}$, 
there exists a global quasiconformal mapping 
$h_{\lambda,A}:\Chat\rightarrow\Chat$ 
conjugating $G_{e^{-1},A}$ to $G_{\lambda,\AAA(\lambda,A)}$ and which restricts to  a hybrid conjugacy between quadratic like restrictions of the two maps.

\end{thm}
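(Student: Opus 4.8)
The plan is to realise $\AAA$ by a quasiconformal surgery that changes the external class of the polynomial-like restrictions from $B_{e^{-1}}$ to $B_\la$ while fixing the internal (hybrid) class, and then to read off horizontal holomorphicity from the holomorphic dependence of the integrating maps in the measurable Riemann mapping theorem. First I would fix $A\in\whMbrot_{e^{-1}}$ and set $f:=G_{e^{-1},A}$. Since $A\in\whMbrot_{e^{-1}}$ one of the critical points $\pm1$ has bounded orbit, so after choosing Jordan domains $V'\Subset V$ about the small filled Julia set $K$ one obtains a degree-two polynomial-like restriction $f\colon V'\to V$ with $K$ connected and external class represented by $B_{e^{-1}}$. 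By the Douady--Hubbard straightening theorem $f$ is hybrid equivalent to a unique $Q_{c(A)}$ with $c(A)\in\Mbrot$, and --- because every map of $\Per_1(\la)$ with connected Julia set has the \emph{same} external class $B_\la$ --- straightening with this fixed external class is injective and yields a homeomorphism $\chi_\la\colon\whMbrot_\la\to\Mbrot$ for each $\la\in\Dstar$. I would then \emph{define} the motion tautologically by $\AAA(\la,A):=\chi_\la^{-1}\bigl(\chi_{e^{-1}}(A)\bigr)$, so that $\AAA(e^{-1},\cdot)=\Id$ and each $\AAA_\la$ is injective; everything then reduces to producing the conjugacies and checking holomorphy in $\la$.

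For the surgery I would use that the degree-two Blaschke products $\{B_\la\}_{\la\in\D}$ form a holomorphic family of a single topological type, hence are mutually quasiconformally conjugate by maps $\psi_\la$ that can be chosen to depend holomorphically on $\la$ with $\psi_{e^{-1}}=\Id$. Transporting $\psi_\la$ into the immediate basin of $\infty$ of $f$ through the linearising coordinate, prescribing it on one fundamental annulus and spreading the resulting almost complex structure by the (holomorphic) dynamics of $f$, while keeping the standard structure on $K$, produces an $f$-invariant Beltrami coefficient $\mu_{\la,A}$ on $\Chat$ supported off $K$, with $\|\mu_{\la,A}\|_\infty<1$ bounded by the dilatation of $\psi_\la$ and depending holomorphically on $\la$.

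Integrating $\mu_{\la,A}$ by the measurable Riemann mapping theorem gives a normalised $h_{\la,A}\colon\Chat\to\Chat$; since $\mu_{\la,A}$ is $f$-invariant, $g:=h_{\la,A}\circ f\circ h_{\la,A}^{-1}$ is a quadratic rational map whose external class is the $\psi_\la$-image of $B_{e^{-1}}$, namely $B_\la$, so $g$ carries a fixed point of multiplier $\la$ and lies in $\Per_1(\la)$. As $\mu_{\la,A}=0$ on $K$, the restriction of $h_{\la,A}$ is a hybrid conjugacy, so $g$ is hybrid equivalent to $Q_{c(A)}$; by injectivity of straightening with external class $B_\la$ this forces $g=G_{\la,\AAA(\la,A)}$. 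Thus $h_{\la,A}$ conjugates $G_{e^{-1},A}$ to $G_{\la,\AAA(\la,A)}$ globally and is a hybrid conjugacy on the polynomial-like parts, exactly as the statement demands.

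It remains to verify the holomorphic-motion axioms. The base-point normalisation and vertical injectivity hold by the tautological definition of $\AAA$, and surjectivity $\AAA_\la(\whMbrot_{e^{-1}})=\whMbrot_\la$ follows since the surgery is reversible (equivalently, since $\chi_\la$ is onto). Horizontal holomorphicity, that $\la\mapsto\AAA(\la,A)$ is holomorphic, follows from the Ahlfors--Bers dependence of $h_{\la,A}$ on the parameter once $\mu_{\la,A}$ is known to be holomorphic in $\la$. The step I expect to be the main obstacle is exactly this last point: arranging the external conjugacies $\psi_\la$ to depend holomorphically on $\la$ with uniformly bounded dilatation, and transporting them into the immediate basin so that the spread-out Beltrami coefficient $\mu_{\la,A}$ inherits both properties. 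Once that is secured, the non-simple-connectivity of $\Dstar$ is harmless, because $\AAA(\la,A)=\chi_\la^{-1}(\chi_{e^{-1}}(A))$ is already single-valued for every $\la$.
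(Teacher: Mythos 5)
The paper offers no proof of this theorem to compare against: it is imported from Uhre's thesis, with the reader sent to \cite[Chapter 8.1--8.3]{uhre2004}. Your outline follows what is, in substance, the strategy of that reference --- a quasiconformal surgery in the basin of the fixed point of multiplier $e^{-1}$ that deforms the external data to that of $\Per_1(\la)$ while freezing the hybrid class, followed by Ahlfors--Bers for holomorphic dependence on $\la$. So the route is the expected one; the difficulty is that the two steps you treat as routine are exactly where the content lies.

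First, your ``tautological'' definition $\AAA(\la,A)=\chi_\la^{-1}(\chi_{e^{-1}}(A))$ presupposes that $\chi_\la:\whMbrot_\la\to\Mbrot$ is a well-defined bijection, i.e.\ that a class in $\Per_1(\la)$ with connected Julia set is determined by its hybrid class together with the external class $B_\la$. Douady--Hubbard give a conformal conjugacy between neighbourhoods of the two filled Julia sets from equality of hybrid and external data, but promoting this to a M\"obius conjugacy of the global rational maps (equivalently, injectivity of straightening on $\whMbrot_\la$) requires spreading the conjugacy through the entire attracting basin; that is a theorem in its own right, not a remark. It is cleaner to \emph{define} $\AAA(\la,A)$ as the parameter produced by the surgery and deduce injectivity of $\AAA_\la$ from reversibility of the surgery afterwards. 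Second, and more seriously, your claim that $\{B_\la\}_{\la\in\D}$ is a ``holomorphic family'' admitting conjugacies $\psi_\la$ holomorphic in $\la$ is false as stated: $B_\la(z)=z\frac{z+\overline{\la}}{1+\la z}$ depends on $\overline{\la}$ as well as $\la$ (its fixed point $0$ has multiplier $\overline{\la}$), so the family is only real-analytic in $\la$. The holomorphic dependence of the invariant Beltrami coefficient $\mu_{\la,A}$ must therefore be obtained by a different mechanism --- in practice by deforming the modulus of the quotient torus of the attracting basin through an explicitly holomorphic family of Beltrami forms, from which one also reads off that the new multiplier is $\la$ (your inference ``external class $B_\la$ hence multiplier $\la$'' is backwards: the multiplier is what the surgery controls directly, and the external class is the consequence). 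You correctly flag this as the main obstacle, but as written the proposed construction of $\psi_\la$ does not deliver it. None of this points to a wrong overall approach; it identifies the parts of the argument that actually constitute the proof the paper delegates to \cite{uhre2004}.
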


In \cite{uhre2004}, the proof was given for the connectedness locus for the family given by the formula 
\begin{equation*}
R_{\la,\mu}(z)=z\frac{z+\mu}{1+\la z}
\end{equation*}
which is conjugate by the affine map 
\begin{equation*}
\phi_{\la,\mu}(z)=\frac{\la z+1}{\sqrt{1-\la\mu}}.
\end{equation*}
to $G_{\la,A}$, 
where 
\begin{equation}\label{my-A-relation}
A=(\la-2)^{2}-\la^{2}\mu\frac{2-\la-\mu}{1-\la\mu}
\end{equation}
In the formula above, observe 
\begin{equation*}
\mu\frac{2-\la-\mu}{1-\la\mu} = \si_{\la}([G_{\la, A}])
\end{equation*}
i.e., the product of the two other (than $\la$) fixed point eigenvalues, compare (\ref{sigmaArelation}) and (\ref{my-A-relation}).

\begin{proof}[Proof of Theorem \ref{MQRapp}]
This is now a straight forward application of \corref{corollaryz_0z_1} to the result of \thmref{thmevamot}.
This approach provides an alternative  to \cite[Chapter 8.4]{uhre2004}. 
Notice that the extension to $0$ corresponds to  the case where $\infty$ is a superattracting fixed point. 

Since $\sibf_\la(A) = \sibf(\la,A) = \si(\la,[\GlaA]) = 
\si(\la,\Ala^{-1}(A)) = \si_\la(\Ala^{-1}(A))$ is holomorphic as a function of the two variables 
and injective, in fact, affine in the second variable (see \eqref{sigmaArelation}), the holomorphic motion $\AAA$ 
is equivalent to the holomorphic motion of $\si_{e^{-1}}(\Mbrot_{e^{-1}})$ over 
$\Dstar$ with base point $e^{-1}$:
\REFEQN{sigmaDstarmotion}
{\mapfromto \wtH {\Dstar\times\si_{e^{-1}}(\Mbrot_{e^{-1}})} \C}
\qquad 
\wtH(\la,\tau) = \sibf_\la\circ\AAA(\la,\sibf_{e^{-1}}^{-1}(\tau))
\ENDEQN
This relation is illustrated by the following diagram:
\begin{center}
\begin{tikzcd}
\whMbrot_{e^{-1}}\arrow{r}{\mathbb{A}}\arrow[dd,bend right=90, swap,  "\sibf_{e^{-1}}"] &\whMbrot_{\la}\arrow[dd,bend left=90, pos=0.49, "\sibf_{\la}"] \\   
\Mbrot_{e^{-1}}\arrow{u}{\bm{A}_{e^{-1}}}\arrow{d}{\si_{e^{-1}}} & \Mbrot_{\lambda}\arrow{u}{\bm{A}_{\lambda}}\arrow{d}{\si_{\lambda}}\\
\si_{e^{-1}}(\Mbrot_{e^{-1}})\arrow{r}{\wtH}   &\si_{\lambda}(\Mbrot_{\lambda})
\end{tikzcd}
\end{center}

The holomorphic motion $\wtH$ has two special points, whose motions are easy to follow. 
The center, for which one of the fixed points is persistently superattracting, i.e., it has multiplier $0$ so that this trajectory is the constant $0$. 
The other is the root corresponding to the two remaining fixed points persistently coalesce to form a parabolic fixed point of multiplier $1$ so that this trajectory is the constant $1$. That is, the motions of the points $0$ and $1$ under $\wtH$ are constants. Thus, by \corref{corollaryz_0z_1}, $\wtH$ extends to a holomorphic motion over $\D$. We denote this extension also by $\wtH$. We write $E=\wtH(0,\si_{e^{-1}}(\Mbrot_{e^{-1}}))$ and denote by 
{\mapfromto H {\D\times E} \C} its reparametrization from $0$:
\REFEQN{thesigmamotion}
H(\la,\tau) = \wtH(\la,\wtH_{0}^{-1}(\tau)).
\ENDEQN
To prove \thmref{MQRapp} we just need to show that in fact, $E = 4\cdot\Mbrot$. 
To see this, note that $[\GlaA]$ contains both of the maps $R_{\la,\mu}$ for which $\mu$ is a solution of the equation \eqref{my-A-relation}, which is quadratic in $\mu$. When $\la\to 0$ the maps $R_{\la,\mu}$ converge to quadratic polynomials of the form $\mu z+ z^2$ uniformly on the sphere. 
For any fixed $\tau=\sibf_{e^{-1}}(A')\in\si(e^{-1},M_{e^{-1}})$ and any $\la\in\Dstar$, let $\mu(\la,\tau)$ be a solution of 
the quadratic equation \eqref{my-A-relation} with $A = \mathbb{A}(\la, A')$. Then the class $[G_{\lambda,\AAA(\lambda,A')}]$ 
contains $R_{\la,\mu(\la,\tau)}$. Thus, for a continuous choice of $\mu(\la,\tau)$ this function converges to $\mu_0$ such that for the quadratic polynomial $\mu_0z+z^2$ the product of the two finite multipliers is $\tau_0=\wtH(0,\tau)$. 
This polynomial is affinely conjugate to $z^2 + c$ with 
$4c = \tau_0$ and has connected filled-in Julia set. 
Thus, $E\subset 4\cdot\Mbrot$. Furthermore, this polynomial is the unique quadratic polynomial for which $R_{\la,\mu(\la,\tau)}$ and $G_{\lambda,\AAA(\lambda,A')}$ are hybridly equivalent according to \cite[Prop. 14 page 313]{douadyandhubbard}.

To complete the proof of \thmref{MQRapp} we need to show surjectivity, i.e, $E=4\cdot\Mbrot$. 
For this we appeal to the \propref{general_straightening} below.
It follows from the proposition that for every $\tau_0=4c \in4\cdot\Mbrot$ there exists $A'\in\whMbrot_{e^{-1}}$ such that $G_{e^{-1},A'}$ is hybridly equivalent to $Q_c$. 
Thus, $\tau_0\in E$, i.e., $4\cdot\Mbrot\subset E$.
\end{proof}
\begin{prop}\label{general_straightening}
For every $c\in\Mbrot$ there exists $A\in\whMbrot_{e^{-1}}$ such that 
$Q_c$ and $G_{e^{-1},A}$ are hybridly equivalent.
\end{prop}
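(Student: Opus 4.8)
The plan is to realize $G_{e^{-1},A}$ as a polynomial-like map and apply the Douady--Hubbard straightening theorem to match it with an arbitrarily prescribed $Q_c$. The map $G_{\la,A}$ has external class $B_\la$, the Blaschke product $z\frac{z+\labar}{1+\la z}$, and at $\la = e^{-1}$ the fixed point at $\infty$ has multiplier $e^{-1}\in\D$, so $\infty$ is attracting. First I would set up, for each $A\in\whMbrot_{e^{-1}}$, a quadratic-like restriction $G_{e^{-1},A} : U' \to U$ whose filled Julia set is the connected set containing the critical orbit of the non-escaping critical point; that this restriction exists and is quadratic-like follows from $G_{e^{-1},A}$ having one attracting fixed point at $\infty$ and two critical points $\pm 1$, one of which stays bounded precisely when $A\in\whMbrot_{e^{-1}}$. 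By the straightening theorem \cite[Prop. 14 page 313]{douadyandhubbard}, each such restriction is hybridly equivalent to a unique $Q_{c(A)}$ with $c(A)\in\Mbrot$, giving a straightening map $\chi : \whMbrot_{e^{-1}} \to \Mbrot$, $A\mapsto c(A)$.

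Next I would establish that $\chi$ is a homeomorphism onto $\Mbrot$, which gives surjectivity and hence the proposition. The main structural input is that the external class is \emph{fixed} equal to $B_{e^{-1}}$ independently of $A$: since the straightening only depends on the internal dynamics once the external class is pinned, injectivity of $\chi$ should follow from the rigidity part of the straightening theorem, and continuity of $\chi$ from the holomorphic dependence of $G_{e^{-1},A}$ on $A$ together with continuity of the straightening map on the relevant family. For properness, I would use the uniform bound on $\si_\la$ over $\bigcup_{\|\la\|<r}\Mbrot_\la$ coming from the Yoccoz inequality \cite{Petersen}[Cor. D2] that is already invoked in the text: this confines $\whMbrot_{e^{-1}}$ and prevents escape of parameters, so $\chi$ is proper. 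A proper continuous injection between (subsets of) planar domains with $\Mbrot$ connected and full then forces $\chi$ to be onto $\Mbrot$.

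Alternatively, and perhaps more cleanly, I would argue surjectivity directly by a continuity/degree argument: construct for a given $c\in\Mbrot$ the candidate external map $B_{e^{-1}}$ and the desired internal quadratic-like dynamics conjugate to $Q_c$, then invoke the converse direction of straightening (realization), which produces a quadratic-like map with external class $B_{e^{-1}}$ and internal class $Q_c$; identifying this realized map inside the family $G_{e^{-1},A}$ exhibits the required $A$. The key point here is that the two-parameter normal form $G_{\la,A}$ exhausts all of $\Per_1(e^{-1})$ via the isomorphism $A\mapsto[\GlaA]$, so every combination of the fixed external class $B_{e^{-1}}$ with an admissible internal class is attained by some $A$.

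I expect the main obstacle to be verifying that the quadratic-like restriction of $G_{e^{-1},A}$ is genuinely quadratic-like with connected filled Julia set \emph{uniformly enough} to make the straightening map continuous and proper, rather than just pointwise well defined. Concretely, one must produce the restriction domains $U'\Subset U$ in a way that varies continuously (or holomorphically) with $A$ and degenerates controllably as $A\to\bdry\whMbrot_{e^{-1}}$; pinning down these domains and the associated moduli bounds is the technical heart, and it is exactly where the fixed external class $B_{e^{-1}}$ and the Yoccoz-type bounds do the work. Once the family of quadratic-like restrictions is under control, the straightening theorem and its rigidity supply injectivity and the surjectivity onto $\Mbrot$ follows from properness.
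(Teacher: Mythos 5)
Your second, ``alternative'' argument is exactly the paper's proof: the paper disposes of the proposition in one line by citing the realization (mating) direction of Douady--Hubbard \cite[Prop.~5, p.~301]{douadyandhubbard}, applied to a quadratic-like restriction of $Q_c$ together with the prescribed external class $B_{e^{-1}}$; the resulting quadratic-like map lies in $\Per_1(e^{-1})$, which the normal form $A\mapsto[G_{e^{-1},A}]$ exhausts, so the required $A\in\whMbrot_{e^{-1}}$ exists. So the essential idea is present in your write-up, and that route is both correct and the intended one.

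Your primary route, however, has a genuine gap that you should not paper over. You propose to build the straightening map $\chi:\whMbrot_{e^{-1}}\to\Mbrot$ and deduce surjectivity from $\chi$ being a continuous, proper injection. Injectivity is fine (with fixed external class $B_{e^{-1}}$, the pair hybrid class plus external class determines the map, by the uniqueness part of Douady--Hubbard), but \emph{continuity} of straightening is not automatic: straightening is in general discontinuous for analytic families of polynomial-like maps, and establishing that $\chi$ is continuous (let alone a homeomorphism) on $\Per_1(\la)$ is essentially the content of Theorem~\ref{thmevamot} and the surrounding machinery from Uhre's thesis --- so this route is either circular or far harder than the statement being proved. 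Moreover, even granting continuity, injectivity and properness, a proper continuous injection into $\C$ is an open map onto its image by invariance of domain, but concluding that the image is all of $\Mbrot$ (a set with empty interior issues aside, $\Mbrot$ is not open) still requires an argument; the clean way out is precisely the realization direction of straightening, i.e.\ your alternative. Keep the one-paragraph realization argument and drop the homeomorphism scaffolding.
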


\begin{proof}
This is a particular case of \cite[Prop. 5 page 301] {douadyandhubbard} applied to a quadratic-like restriction of $Q_c$ and the external class $B_{e^{-1}}$.
\end{proof}

Then reapplying \eqref{sigmaArelation} again or applying \corref{corollaryz_0z_1} 
directly to $\AAA$ we find 
\begin{cor}
The holomorphic motion $\AAA$ of $\whMbrot_{e^{-1}}$ over $\Dstar$ 
is a quasiconformal reparametrization of a holomorphic explosion over $\D$ from $(0,4)$. 
Indeed, writing $A(\tau) = \sibf_{e^{-1}}^{-1}\circ H({e^{-1}},\tau)$, 
then for any $\tau\in \Mbrot_0$ we have 
\REFEQN{MbrotAext}
\AAA(\la,A(\tau)) = (\la-2)^2 - \la^2 H(\la,\tau),
\ENDEQN
where $H$ is the holomorphic motion in \thmref{MQRapp} as given by \eqref{thesigmamotion}.

\end{cor}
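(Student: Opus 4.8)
The plan is to establish the explicit identity \eqref{MbrotAext} by direct substitution, following the hint to reapply \eqref{sigmaArelation}, and then to read off its right-hand side as a holomorphic explosion. First I would invert the affine relation \eqref{sigmaArelation}: for each fixed $\la\in\Cstar$ the map $A\mapsto\sibf_\la(A)$ is affine, with inverse $s\mapsto(\la-2)^2-\la^2 s$. Thus the right-hand side of \eqref{MbrotAext} is precisely $\sibf_\la^{-1}(H(\la,\tau))$, and it suffices to prove $\AAA(\la,A(\tau))=\sibf_\la^{-1}(H(\la,\tau))$ for $\la\in\Dstar$.

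The core of the argument is a bookkeeping computation with base points. Applying $\sibf_\la$ to the definition \eqref{sigmaDstarmotion} gives $\sibf_\la(\AAA(\la,A))=\wtH(\la,\sibf_{e^{-1}}(A))$ for all $A$. Because $\wtH$ has base point $e^{-1}$, its slice $\wtH_{e^{-1}}$ is the identity, so the reparametrization \eqref{thesigmamotion} forces $H(e^{-1},\tau)=\wtH(e^{-1},\wtH_0^{-1}(\tau))=\wtH_0^{-1}(\tau)$. Setting $A=A(\tau)$ and using $\sibf_{e^{-1}}(A(\tau))=H(e^{-1},\tau)=\wtH_0^{-1}(\tau)$, I obtain
\begin{equation*}
\sibf_\la(\AAA(\la,A(\tau)))=\wtH(\la,\wtH_0^{-1}(\tau))=H(\la,\tau),
\end{equation*}
where the last step is \eqref{thesigmamotion}. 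Applying $\sibf_\la^{-1}$ and the inversion of Step 1 yields \eqref{MbrotAext}.

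With the identity in hand, write $\Phi(\la,\tau):=(\la-2)^2-\la^2 H(\la,\tau)$ as a map $\D\times E\to\C$ with $E=4\cdot\Mbrot$. This matches the definition of an order-$2$ holomorphic explosion from $(0,4)$: take $f(\la)=(\la-2)^2$, so that $f(0)=4$; take $g(\la)=-\la^2$, which has a zero of order $2$ at the explosion parameter $\la^*=0$; and take $\whH=H$, which by \thmref{MQRapp} is a holomorphic motion of $E$ over $\D$ with base point $0$. Identity \eqref{MbrotAext} then says exactly that restricting $\Phi$ to $\Dstar$ and reparametrizing the vertical variable along $\tau\mapsto A(\tau)$ recovers $\AAA$.

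It remains to see that this reparametrization is quasi-conformal. Since $A(\tau)=\sibf_{e^{-1}}^{-1}(\wtH_0^{-1}(\tau))$ is the composition of the affine, hence conformal, map $\sibf_{e^{-1}}^{-1}$ with $\wtH_0^{-1}$, and since $\wtH$ is a holomorphic motion over the simply connected domain $\D$, S{\l}odkowski's theorem makes the slice $\wtH_0$ a quasi-conformal homeomorphism; hence $A(\cdot)$ is a quasi-conformal map carrying $E$ onto $\whMbrot_{e^{-1}}$. This exhibits $\AAA$ as a quasi-conformal vertical reparametrization of the holomorphic explosion $\Phi$ over $\D$ from $(0,4)$. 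The only delicate point is the base-point bookkeeping of the second paragraph: one must keep straight that $\AAA$ and $\wtH$ are based at $e^{-1}$ while $H$ is based at $0$, so that the extension across $\la^*=0$ is furnished by the already-established extension of $H$ in \thmref{MQRapp} rather than by any fresh analysis of $\AAA$ itself.
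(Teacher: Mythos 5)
Your proposal is correct and follows exactly the route the paper intends: the paper disposes of this corollary with the single remark ``reapplying \eqref{sigmaArelation} again,'' and your computation — inverting the affine map $\sibf_\la$, using $\wtH_{e^{-1}}=\mathrm{Id}$ and \eqref{thesigmamotion} to get $\sibf_\la(\AAA(\la,A(\tau)))=H(\la,\tau)$, then reading off $f(\la)=(\la-2)^2$, $g(\la)=-\la^2$, $\whH=H$ against the definition of a holomorphic explosion — is precisely the bookkeeping that remark leaves implicit. Your closing observation that the vertical reparametrization $A(\tau)=\sibf_{e^{-1}}^{-1}\circ\wtH_0^{-1}(\tau)$ is quasi-conformal via S{\l}odkowski applied to $\wtH$ over $\D$ is also the right justification for the ``quasi-conformal reparametrization'' phrasing.
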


\end{document}